\documentclass[11pt]{article}
\usepackage{graphicx}
\usepackage{geometry}
\usepackage{amsfonts,bbm}
\usepackage{amsthm}
\usepackage{amssymb}
\usepackage{amsmath,amsfonts}
\usepackage{mathtools, bm}
\usepackage{xcolor}
\usepackage{dsfont}
\usepackage{comment}
\usepackage{hyperref}
\usepackage[numbers]{natbib}
\usepackage{tikz}
\usetikzlibrary{arrows,decorations.pathreplacing, bending,hobby}
\usetikzlibrary{shapes.geometric}
\usetikzlibrary{decorations.markings}
\usepackage{subcaption}
\usepackage{cleveref}[capitalise, compress, nameinlink, noabbrev]

\theoremstyle{plain}
\newtheorem{theorem}{Theorem}
\crefname{theorem}{Theorem}{Theorems}

\newtheorem{proposition}[theorem]{Proposition}
\crefname{proposition}{Proposition}{Propositions}

\newtheorem{corollary}[theorem]{Corollary}
\crefname{corollary}{Corollary}{Corollaries}

\crefname{lemma}{Lemma}{Lemmata}

\crefname{conjecture}{Conjecture}{Conjectures}

\crefname{problem}{Problem}{Problems}

\newtheorem{claim}[theorem]{Claim}
\crefname{claim}{Claim}{Claims}

\crefname{observation}{Observation}{Observations}

\crefname{setup}{Setup}{Setups}

\crefname{fact}{Fact}{Facts}

\crefname{algorithm}{Algorithm}{Algorithms}

\crefname{remark}{Remark}{Remarks}

\crefname{example}{Example}{Examples}

\theoremstyle{definition}

\crefname{definition}{Definition}{Definitions}

\crefname{construction}{Construction}{Constructions}

\crefname{question}{Question}{Questions}

\numberwithin{equation}{section}

\crefname{section}{Section}{Sections}
\crefname{appendix}{Appendix}{Appendix}

\crefname{figure}{Figure}{Figures}

\newcommand\ceil[1]{\left\lceil#1\right\rceil}
\newcommand\floor[1]{\left\lfloor#1\right\rfloor}

\DeclareMathOperator{\AP}{AP}
\DeclareMathOperator{\MP}{MP}

\DeclareMathOperator{\ex}{ex}

\allowdisplaybreaks

\title{Ordered Ramsey and Turán numbers of alternating paths and their variants} 
\author{Gaurav Kucheriya\thanks{Department of Applied Mathematics, Charles University, Prague, Czechia. Email: \url{gaurav@kam.mff.cuni.cz}. Research supported by GAČR grant 25-17377S and GAUK project 92125.} \and Allan Lo\thanks{School of Mathematics, University of Birmingham, B15 2TT, UK. Email: \url{s.a.lo@bham.ac.uk}.} \and Jan Petr\thanks{Fakultät für Informatik und Mathematik, Universität Passau, Germany. Email: \url{jan.petr@uni-passau.de}, \url{amedeo.sgueglia@uni-passau.de}. JP is funded by the Deutsche Forschungsgemeinschaft (DFG, German Research Foundation) – 542321564. AS is funded by the Alexander von Humboldt Foundation.} \and Amedeo Sgueglia\footnotemark[3] \and Jun Yan\thanks{Mathematical Institute, University of Oxford, Oxford OX2 6GG, UK. Email: \url{jun.yan@maths.ox.ac.uk}. Supported by
ERC Advanced Grant 883810.}}

\begin{document}

\maketitle
\begin{abstract}
    An ordered graph is a graph whose vertex set is equipped with a total order. The ordered complete graph $K_N^<$ is the complete graph with vertex set $[N]$ equipped with the natural ordering of the integers.
    Given an ordered graph $H$, the ordered Ramsey number $R_<(H)$ is the smallest integer $N$ such that every red/blue edge-colouring of $K_N^<$ contains a monochromatic copy of $H$ with vertices appearing in the same relative order as in $H$.
    
    Balko, Cibulka, Kr\'al, and Kyn\v cl asked whether, among all ordered paths on $n$ vertices, the ordered Ramsey number is minimised by the alternating path $\AP_n$ -- the ordered path with vertex set $[n]$ such that the vertices encountered along the path are $1, n, 2, n - 1,3, n-2,\dots$\,. Motivated by this problem, we make progress on establishing the value of $R_<(\AP_n)$ by proving that 
    \[
    R_{<}(\AP_n)\leq \left(2+\frac{\sqrt{2}}{2}+o(1)\right)n.
    \]

    We then use similar methods to determine the exact ordered Tur\'an number of $\AP_n$, and study the ordered Ramsey and Tur\'an numbers of several related ordered paths.
\end{abstract}

\newpage
\section{Introduction}

An \emph{ordered graph} is a graph whose vertex set is equipped with a total order. 
We will equivalently view an $n$-vertex ordered graph as a graph whose vertices are labelled with $\{1,2,\dots,n\}$ (so the total order is the usual ordering of integers).
We say that an ordered graph $G$ \emph{contains} an ordered graph $H$ if there is an injection $f:V(H) \to V(G)$ that preserves both the vertex order and adjacency, that is, for all $i < j$ in $V(H)$, we have $f(i) < f(j)$ in $V(G)$, and $f(i)f(j) \in E(G)$ whenever $ij \in E(H)$. 
We say that $G$ \emph{avoids} $H$ if $G$ does not contain $H$. 
In recent years, there has been increasing interest in translating classical results on graphs to the ordered setting.
Here we consider ordered Ramsey and Tur\'an problems for different ordered paths.

\subsection{Ordered Ramsey number} 
The graph-theoretic version of Ramsey's theorem~\cite{ramsey1987problem} states that for every fixed graph $H$, there exists $N$ such that every red/blue edge-colouring of the complete graph on $N$ vertices contains a monochromatic copy of $H$. 
The smallest integer $N$ with the above property is called the \emph{Ramsey number} of $H$ and is denoted by $R(H)$.
The problem of quantitatively determining $R(H)$ has attracted a lot of attention over the past several decades, but has remained extremely challenging in general. 
Even when $H$ is the complete graph, little progress had been made since the upper bound of Erd\H{o}s and Szekeres~\cite{erdos1935combinatorial} in 1935 until the recent major breakthrough of Campos, Griffiths, Morris, and Sahasrabudhe~\cite{campos2023exponential}.

Recently, Balko, Cibulka, Král, and Kynčl~\cite{balko2013ramsey}, as well as Conlon, Fox, Lee, and Sudakov~\cite{conlon2017ordered}, independently initiated a systematic study of the analogue of Ramsey's theorem for ordered graphs. For a comprehensive survey of results in this area we refer to Balko~\cite{balko2025survey}.
Given an ordered graph $H$, the \emph{ordered Ramsey number} $R_<(H)$ is defined to be the smallest natural number $N$ such that every red/blue edge-colouring of $K_N^<$, the ordered complete graph on $\{1,2,\ldots, N\}$, contains a monochromatic ordered copy of $H$. 
It is straightforward to observe that $R_<(H)$ is well-defined as $R_<(H) \le R(K_{|H|})$.
Moreover, although it always holds that $R(H) \le R_<(H)$, the quantities $R(H)$ and $R_<(H)$ may behave very differently and a change in the labels of the vertices of $H$ may result in drastic changes in the ordered Ramsey number.

For example, this was observed for paths in~\cite{balko2013ramsey}.
Let $P_n$ be the $n$-vertex path on $[n]$.
In the unordered setting, Gerencsér and Gyárfás~\cite{gerencser1967ramsey} showed that $R(P_n)=\floor{3n/2}-1$, which is linear in $n$.
In the ordered setting, consider first the labelled path on $[n]$ where the vertices encountered along the path are $1, 2, \dots, n-1, n$.
This is usually known as the \emph{monotone ordered path} on $n$ vertices and is denoted by $\MP_n$.
The Erd\H{o}s–Szekeres theorem~\cite{erdos1935combinatorial}, dating back to the very origins of Ramsey theory, implies that $R_<(\MP_n) = (n - 1)^2 + 1$, which is quadratic in $n$.
On the other hand, Balko, Cibulka, Kr\'al, and Kyn\v cl~\cite[Proposition~3]{balko2013ramsey} showed that for a different vertex labelling, yielding the so-called alternating path, the ordered Ramsey number is again linear in $n$ and more closely resembles the linear behaviour of $R(P_n)$.
The interest in this particular ordered path is also due to a still open problem \cite[Problem 29]{balko2013ramsey} which asks whether among all the ordered paths on $n$ vertices, the alternating path minimises the ordered Ramsey number.

The \emph{alternating path} $\AP_n$ on $n$ vertices is the labelled path with vertex set $[n]$ such that the vertices encountered along the path are: 
$1,n,2,n-1,\ldots,n/2,n/2+1$, if $n$ is even; and $1,n,2,n-1,\ldots,\lfloor n/2 \rfloor ,\lfloor n/2\rfloor +2,\lfloor n/2\rfloor +1$, if $n$ is odd.
See~Figure~\ref{fig:orederpaths}(b) for a drawing of $AP_8$. It was proved in~\cite{balko2013ramsey} that
\[(5/2+o(1))n=5\lfloor n/2\rfloor-4\leq R_<(\AP_n)\leq 2n-3+\sqrt{2n^2-8n+11}=(2+\sqrt2+o(1))n.\] 
The proof of the upper bound above uses an extremal result on $\{0,1\}$-matrices established by Füredi and Hajnal~\cite[Theorem 2.2]{furedihajnaldavenport} and Cibulka~\cite[Corollary 1.9]{cibulka2013extremal}.
Later, Balko, Jel\'inek, and Valtr~\cite[Lemma 5.1]{balko2019ordered}, while studying the ordered Ramsey number of graphs with bounded maximum degree, used a different method and obtained the weaker upper bound $R_<(\AP_n)\leq(4+o(1))n$.
We refine their method and make progress towards determining the precise linear multiplicative factor.

\begin{theorem}\label{th:ramsey}
   $R_{<}(\AP_n)\leq2n-2+\floor{\frac{\sqrt{2(n-2)^2+(-1)^n}-1}{2}} = \left(2+\frac{\sqrt{2}}{2}+o(1)\right)n$.
\end{theorem}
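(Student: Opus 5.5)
We plan to refine the Balko--Jel\'inek--Valtr labelling argument. Read backwards from its last vertex, $\AP_n$ starts with the adjacent pair $\{\lfloor n/2\rfloor, \lfloor n/2\rfloor+1\}$ (up to parity) and then grows outward: each new vertex lies outside the interval spanned by the current vertices, on the side opposite to the previous one. So a copy of $\AP_k$ in $K_N^<$ is a path $x_1,\dots,x_k$ where each $x_{i+1}$ lies outside $[\min_{j\le i}x_j,\max_{j\le i}x_j]$ and the sides alternate.

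For an ordered pair of vertices $u<v$ and a colour $c\in\{\text{red},\text{blue}\}$, we will define $\ell_c(u,v,\text{L})$ and $\ell_c(u,v,\text{R})$. These are the number of vertices of a longest outward-alternating $c$-coloured path whose span is exactly $[u,v]$ and whose last vertex is $u$ (respectively $v$). Suppose there is no monochromatic $\AP_n$; then all these labels are at most $n-1$. The basic monotonicity step is as follows. If $w>v$ and the edge $uw$ has colour $c$, then $\ell_c(u,w,\text{R})\ge \ell_c(u,v,\text{L})+1$, and symmetrically on the left. This is the analogue of the Erd\H{o}s--Szekeres step: along nested intervals the pair of red and blue labels strictly increases in at least one coordinate.

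The proof then has three steps.
\begin{itemize}
\item[(i)] Fix the smallest vertex $1$ and the largest vertex $N$. Since each label is at most $n-1$, each extremal vertex can be extended only a bounded number of times. This forces roughly $2n$ ``wasted'' vertices, namely about $n$ on each side, and accounts for the $2n-2$ term.
\item[(ii)] Look at the remaining middle window of vertices. Show that distinct middle vertices $m$ yield distinct label pairs $(\ell_{\text{red}},\ell_{\text{blue}})$, where the labels are taken for the longest paths with $m$ as innermost or turning vertex.
\item[(iii)] Show that these label pairs satisfy a constraint of the form $\ell_{\text{red}}+\ell_{\text{blue}}\ge n-2$ with both coordinates at most $n-1$. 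Such pairs fill a triangle-type region, and counting the region gives at most about $\tfrac{(n-2)^2}{2}$ admissible configurations. A second, quadratic, counting over these configurations then bounds the middle window by $\frac{\sqrt{2(n-2)^2+(-1)^n}-1}{2}$. This is the solution of $k(k+1)/2\le \bigl((n-2)^2-\text{parity}\bigr)/4$, and the parity term $(-1)^n$ comes from the odd case of $\AP_n$, whose last step is irregular.
\end{itemize}

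The main obstacle is step (iii). We need the right invariant that makes the middle vertices inject into a two-dimensional region of area about $(n-2)^2/2$ rather than $(n-2)^2$; the weaker injection would recover only the $(2+\sqrt2)n$ or $4n$ bounds. Our first attempt is to exploit that both colours at a vertex are available: for middle vertices $m<m'$, the edge $mm'$ has some colour $c$, and this strictly increases the $c$-label of $m'$ over that of $m$ on one side while not decreasing the other label. We would then combine this with the fact that a monochromatic path can start at either of the two innermost vertices. Together these should give the triangular packing, after which the bound of \cref{th:ramsey} follows by solving the quadratic inequality and checking the floor and parity.
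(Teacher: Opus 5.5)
\textbf{There is a genuine gap.} You identify the right numerology, but the proposal has no working mechanism, and the steps you do sketch fail as stated.

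\emph{The labels cannot support an Erd\H{o}s--Szekeres argument.} In an outward-alternating path, the last edge always joins the two current extremes. So a path with span exactly $[u,v]$ ends with the edge $uv$, and $\ell_c(u,v,\cdot)=0$ unless $uv$ itself has colour $c$. A pair therefore carries only one non-trivial label, not a ``pair of red and blue labels''. If you pass to vertex labels, as in (ii) and your last paragraph, the monotonicity step is false. Given $m<m'$ with $mm'$ of colour $c$, a longest $c$-path ending at $m$ can be extended by $m'$ only if two things hold: $m$ is the leftmost vertex of that path (its last move went left), and $m'$ lies to the right of its whole span. Neither is guaranteed. So nothing forces distinct middle vertices to receive distinct label pairs. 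The constraint $\ell_{\mathrm{red}}+\ell_{\mathrm{blue}}\ge n-2$ in (iii) and the ``about $n$ wasted vertices on each side'' in (i) are likewise unsupported.

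\emph{The counting cannot give a linear bound.} Even if middle vertices did inject into $\Theta(n^2)$ label configurations, that would bound the middle window only by $\Theta(n^2)$. You correctly read off from the formula that the floor term is the largest $k$ with $\binom{k+1}{2}\le (n-2)^2/4$ (resp.\ $(n-1)(n-3)/4$ for odd $n$). That relation says \emph{pairs}, not vertices, must be charged against roughly $(n-2)^2/4$ objects. Your unspecified ``second, quadratic, counting'' would have to supply exactly this, and that is the whole content of the theorem.

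\textbf{What the paper does instead.} The proof is an edge count, not a vertex injection.
\begin{itemize}
\item It takes overlapping intervals $A=[1,a]$ and $B=[n,N]$, and looks only for copies of $\AP_n$ whose first $\lceil n/2\rceil$ vertices lie in $A$ and whose remaining vertices lie in $B$.
\item It sets aside the $f=\binom{\lfloor n/2\rfloor}{2}+\binom{\lceil n/2\rceil-1}{2}$ grey edges of $E(A,B)$, which cannot lie in such a copy.
\item It runs $n-2$ deletion rounds. Odd rounds remove the leftmost remaining red edge and leftmost remaining blue edge at each vertex of an interval $I_i\subseteq B$; even rounds remove the rightmost ones at each vertex of $J_i\subseteq A$.
\item Any surviving non-grey edge can be backtracked into a monochromatic $\AP_n$. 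The staircase orientation of $\AP_n$ makes this valid even though $A\cap B\neq\emptyset$.
\end{itemize}
In effect, the round in which an edge is deleted is a label on \emph{edges}. The pigeonhole is over (vertex, round, colour) triples, with at most one edge each. With $N=2n-2+k$, the difference between $e(A,B)$ and the resulting bound on the number of deleted edges is exactly $\binom{k+2}{2}$. Hence the success condition becomes $\binom{k+2}{2}>f$. So $(n-2)^2/4$ counts grey edges, and the quadratic in $k$ is an edge surplus. This edge-level accounting is the missing idea in your plan.
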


In his master's thesis~\cite{poljak2023off}, Poljak listed the exact values or lower bounds on $R_<(\AP_n)$ up to $n=13$, which were obtained from computer experiments using a SAT solver algorithm.
It is surprising that, as shown in Table~\ref{table:value} below, the upper bound from Theorem~\ref{th:ramsey} matches all the known results reported in~\cite{poljak2023off}.
However, we do not believe that our bound is sharp for large~$n$. 

\begin{table}[h]
\centering
\begin{tabular}{||c|c|c|c|c|c|c|c|c|c|c|c|c||}
\hline
$n$      & $2$ & $3$ & $4$ & $5$ & $6$ & $7$ & $8$ & $9$ & $10$ & $11$ & $12$ & $13$ \\
\hline\hline
UB       & $2$ & $4$ & $7$ & $9$ & $12$ & $15$ & $17$ & $20$ & $23$ & $25$ & $28$ & $31$ \\
\hline
\cite{poljak2023off}  & $2$ & $4$ & $7$ & $9$ & $12$ & $15$ & $17$ & $20$ & $23$ & $\geq25$ & $\geq28$ & $\geq31$ \\
\hline
\end{tabular}
\caption{Upper bounds (UB) from Theorem~\ref{th:ramsey} and results from~\cite{poljak2023off} on $R_<(\AP_n)$.}\label{table:value}
\end{table}

\subsection{Ordered Tur\'an numbers}
Tur\'an-type problems constitute another major research direction in extremal graph theory and are closely related to Ramsey-type problems. 
Given a graph $H$, the goal is to determine the \emph{Tur\'an number} of $H$, denoted by $\ex(n,H)$, which is the maximum number of edges an $n$-vertex graph can have if it does not contain $H$.
The famous Erd\H{o}s--Stone--Simonovits theorem~\cite{ErdosSimonovits1966,ErdosStone1946} determines $\ex(n,H)$ asymptotically for every non-bipartite graph $H$, in terms of its chromatic number.
In 2006, Pach and Tardos~\cite{pach2006forbidden} introduced Tur\'an-type problems for ordered graphs.
Analogous to the setting of (unordered) graphs, for an ordered graph~$H$, the object of interest is the \emph{ordered Tur\'an number} $\ex_<(n,H)$, defined as the maximum number of edges an ordered graph with vertex set $\{1,2,\ldots,n\}$ can have if it avoids $H$.
Pach and Tardos obtained a generalization of the Erd\H{o}s--Stone--Simonovits theorem in this setting, with the role of chromatic number replaced by a related parameter which they called the interval chromatic number. 

We determine the exact ordered Tur\'an number of $\AP_n$. 
\begin{theorem}\label{th:turan}
Let $N\geq n$. Then $\ex_<(N,\AP_n)=\binom{n-1}2+(n-2)(N-n+1)$.
\end{theorem}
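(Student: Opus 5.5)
The formula equals $\sum_{d=1}^{n-2}(N-d)=(n-2)N-\binom{n-1}{2}$, which suggests both the extremal construction and the structure of the upper bound argument.

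\emph{Lower bound.} Take the ordered graph on $[N]$ whose edges are exactly the pairs $ij$ with $1\le j-i\le n-2$. It has $\sum_{d=1}^{n-2}(N-d)=\binom{n-1}2+(n-2)(N-n+1)$ edges. It avoids $\AP_n$ because any ordered copy of $\AP_n$ contains the image of the edge $\{1,n\}$. That edge is the outermost edge of the copy, and its endpoints have the images of the other $n-2$ vertices strictly between them, so its length is at least $n-1$.

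\emph{Upper bound.} Let $G$ on $[N]$ avoid $\AP_n$. I would partition $E(G)$ into classes $E_1,\dots,E_{n-2}$ and show $|E_d|\le N-d$. To define the classes, view a copy of an alternating path as a walk $x_1,x_2,x_3,\dots$ whose vertices alternate between an increasing left sequence and a decreasing right sequence, with every later vertex lying strictly inside the interval spanned by the earlier edges. For an edge $e$ of $G$, let $h(e)$ be the maximum number of vertices of such a nested alternating walk in $G$ whose last, innermost, edge is $e$. I must fix a consistent traversal direction: for instance, take the maximum over both directions and record which endpoint is entered last. Since $G$ avoids $\AP_n$, every edge has $h(e)\le n-1$, and trivially $h(e)\ge 2$. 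Set $E_d=\{e: h(e)=d+1\}$ for $d=1,\dots,n-2$.

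The key step is to bound each class by an injection. Send each $e=ij\in E_d$ to its left endpoint $i$ (or to its right endpoint, depending on the direction of the last step), and show that two edges of the same class cannot share the relevant endpoint. The mechanism is the extension observation. If $ij$ and $ij'$ with $i<j<j'$ are both edges, and an optimal walk for $ij'$ ends $\dots,a,j',i$ with $a<i$, then appending $j$ gives the walk $\dots,a,j',i,j$. This is still nested, since $i<j<j'$, so $h(ij)\ge h(ij')+1$; symmetric statements hold when the shared endpoint is the right one. Thus within one class, edges at a common vertex must be attached on opposite sides. This should give at most one edge per vertex per class, except for a boundary loss. That loss comes from the observation that a walk of $d+1$ vertices ending at $e$ forces at least $d-1$ vertices outside $e$ on the side from which it was entered, which kills $d$ candidate vertices at the end of the order. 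This yields $|E_d|\le N-d$, and summing over $d$ gives the bound.

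I expect the main obstacle to be making the injection well defined. An edge may attain $h$ through walks entering it from either endpoint, and the two directions interact. I would first try orienting each edge by the direction of some optimal walk, breaking ties toward the left. I would then check that the extension argument still forbids collisions in all four configurations: shared left or right endpoint, crossed with the two orientations. If that fails, my fallback is induction on $N$. The base case $N=n$ holds since $\binom{n}{2}-1$ edges is the truth: the only graph with more edges is $K_n^<$. For the inductive step, I would delete the vertex $N$ and show that it, together with the edges it completes, accounts for at most $n-2$ edges. The same nested-walk extension argument would be the tool: it shows that $N$ has at most $n-2$ neighbours whose edge to $N$ is not already blocked.
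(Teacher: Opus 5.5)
Your lower bound is correct. It is the Bar\'at--Freschi--T\'oth construction quoted in the paper's closing remark; the paper's own extremal graph instead takes all edges meeting $[1,\lceil n/2\rceil-1]\cup[N-\lfloor n/2\rfloor+2,N]$.

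For the upper bound, the step you flag as the main obstacle is left unproved in the proposal. It closes with one observation: parity.

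\begin{itemize}
\item Every walk starts in the left sequence. So ``both directions'' can only mean both choices of which endpoint of $e$ is entered last; walks starting on the right would detect copies of $P_n^{>,<}$ and break $h(e)\le n-1$.
\item Left and right vertices therefore alternate starting from the left, so the last vertex is the left endpoint of $e$ exactly when the walk has odd length.
\item Hence if $h(e)=d+1$, every optimal walk for $e$ enters the left endpoint last when $d$ is even, and the right endpoint last when $d$ is odd. The orientation is fixed by the class and no tie-breaking is needed.
\item A collision at a vertex $v$ can only involve two edges of $E_d$ that both have $v$ as left endpoint ($d$ even) or both have $v$ as right endpoint ($d$ odd). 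Your extension step excludes exactly these. If $vj,vj'\in E_d$ with $v<j<j'$, appending $j$ to an optimal walk for $vj'$ gives $h(vj)\ge d+2$. If $iv,i'v\in E_d$ with $i<i'<v$, appending $i'$ to an optimal walk for $iv$ gives $h(i'v)\ge d+2$.
\item So sending $e$ to its last-entered endpoint is injective on $E_d$. That endpoint has exactly $\lceil d/2\rceil$ other walk vertices to its left and $\lfloor d/2\rfloor$ to its right. It therefore lies in $[\lceil d/2\rceil+1,\,N-\lfloor d/2\rfloor]$, which has $N-d$ elements.
\end{itemize}
Summing $|E_d|\le N-d$ over $d\in[1,n-2]$ gives the bound.

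Two intermediate statements need correcting. First, ``edges at a common vertex must be attached on opposite sides'' (and ``at most one edge per vertex per class'' read literally) is false. For $n=4$, take $a<i<i'<j$ with edges $aj,ij,i'j$: then $h(ij)=h(i'j)=3$, and both edges sit on the left of $j$; only their last-entered endpoints $i\ne i'$ differ. Second, the boundary loss does not come from $d-1$ vertices ``on the side from which it was entered''. The earlier walk vertices lie on both sides, and what matters is the $\lceil d/2\rceil$/$\lfloor d/2\rfloor$ split above.

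Your fallback induction does not work as described. For $n\ge 4$ the paper's extremal graph has $\deg(N)=N-1$, so no bound of $n-2$ on the edges charged to vertex $N$ can hold. With the parity remark it is not needed.

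With that remark, your argument is a complete proof by a genuinely different route. The paper first sets aside the $\binom{n-1}2$ grey edges that lie in no copy of $\AP_n$ in $K_N^<$. It then runs $n-2$ deletion rounds, each removing at most $N-n+1$ edges, and extracts an $\AP_n$ from any surviving edge by backtracking. You replace the algorithm by a static labelling of each edge by the length of the longest nested walk ending at it. This splits the same total as $(N-1)+(N-2)+\dots+(N-n+2)$ rather than $\binom{n-1}2+(n-2)(N-n+1)$. Your extension step plays the role of the paper's backtracking, and you avoid both the grey-edge bookkeeping and the induction over the algorithm's steps, so your proof is shorter. The paper's algorithmic form has the advantage of being shared with its proof of Theorem~\ref{th:ramsey}, where it removes one red and one blue edge per vertex and uses overlapping sets $A$ and $B$.
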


We remark that one could deduce an upper bound on $R_<(\AP_n)$ by considering the majority colour in a red/blue edge-colouring of the ordered complete graph, and applying Theorem~\ref{th:turan} to the subgraph induced by the edges of that colour.
However, this would only recover the weaker bound $R_<(\AP_n)\leq(2+\sqrt{2}+o(1))n$ from~\cite{balko2013ramsey}.

\subsection{Other ordered paths}
\label{sec1.3}
Consider for simplicity the case when $n$ is even, i.e., $n=2k$ for some $k \in \mathbb{N}$. 
Observe that the alternating path $\AP_n$ has the following equivalent description:
As we traverse the path starting from the endpoint in $A=\{1,2,\ldots,k\}$, we alternate between vertices of $A$ and $B=\{k+1,k+2,\ldots,2k\}$, and encounter the vertices of $A$ in increasing order and the vertices of $B$ in decreasing order.
If we still alternate between $A$ and $B$, but reverse the order of appearance of vertices in one or both of the intervals $A$ and $B$, then we obtain three other ordered paths on $\{1,2,\ldots,n\}$. We denote them as $P_n^{r_1,r_2}$, where $r_1$ (resp. $r_2$) is $<$ if, starting from the endpoint in $A$, the vertices of $A$ (resp. $B$) are encountered in increasing order, and $>$ if they are in decreasing order. 
With this notation, we have $\AP_n=P_n^{<,>}$.
Figure~\ref{fig:orederpaths} shows the four possible ordered paths on 8 vertices that can arise this way.
\begin{figure}[h]
\centering
\begin{subfigure}[c]{0.48\textwidth}
\centering
\scalebox{1}{%
\begin{tikzpicture}
\fill (0,0) circle[radius=2pt];
\fill (0.5,0) circle[radius=2pt];
\fill (1,0) circle[radius=2pt];
\fill (1.5,0) circle[radius=2pt];
\fill (2,0) circle[radius=2pt];
\fill (2.5,0) circle[radius=2pt];
\fill (3,0) circle[radius=2pt];
\fill (3.5,0) circle[radius=2pt];

\begin{scope}
    \clip (0,0) rectangle (3.5,2.5);
    \draw (1.0,0) circle(1.0);
    \draw (1.5,0) circle(1.0);
    \draw (2.0,0) circle(1.0);
    \draw (2.5,0) circle(1.0);
    \draw (1.25,0) circle(0.75);
    \draw (1.75,0) circle(0.75);
    \draw (2.25,0) circle(0.75);

\end{scope}

\node[align=center] at (0,-0.25) {$1$};
\node[align=center] at (0.5,-0.25) {$2$};
\node[align=center] at (1,-0.25) {$3$};
\node[align=center] at (1.5,-0.25) {$4$};
\node[align=center] at (2,-0.25) {$5$};
\node[align=center] at (2.5,-0.25) {$6$};
\node[align=center] at (3,-0.25) {$7$};
\node[align=center] at (3.5,-0.25) {$8$};
\end{tikzpicture}
}
\caption{$P^{<,<}_8$}
\end{subfigure}
\begin{subfigure}[c]{0.48\textwidth}
\centering
\scalebox{1.0}{%
\begin{tikzpicture}
\fill (0,0) circle[radius=2pt];
\fill (0.5,0) circle[radius=2pt];
\fill (1,0) circle[radius=2pt];
\fill (1.5,0) circle[radius=2pt];
\fill (2,0) circle[radius=2pt];
\fill (2.5,0) circle[radius=2pt];
\fill (3,0) circle[radius=2pt];
\fill (3.5,0) circle[radius=2pt];

\begin{scope}
    \clip (0,0) rectangle (3.5,2.5);
    \draw (2.0,0) circle(0.5);
    \draw (2.0,0) circle(1.0);
    \draw (2.0,0) circle(1.5);
    \draw (1.75,0) circle(0.25);
    \draw (1.75,0) circle(0.75);
    \draw (1.75,0) circle(1.25);
    \draw (1.75,0) circle(1.75);

\end{scope}

\node[align=center] at (0,-0.25) {$1$};
\node[align=center] at (0.5,-0.25) {$2$};
\node[align=center] at (1,-0.25) {$3$};
\node[align=center] at (1.5,-0.25) {$4$};
\node[align=center] at (2,-0.25) {$5$};
\node[align=center] at (2.5,-0.25) {$6$};
\node[align=center] at (3,-0.25) {$7$};
\node[align=center] at (3.5,-0.25) {$8$};
\end{tikzpicture}
}
\caption{$P^{<,>}_8=AP_8$}
\end{subfigure}
\begin{subfigure}[c]{0.48\textwidth}
\centering
\scalebox{1}{%
\begin{tikzpicture}
\fill (0,0) circle[radius=2pt];
\fill (0.5,0) circle[radius=2pt];
\fill (1,0) circle[radius=2pt];
\fill (1.5,0) circle[radius=2pt];
\fill (2,0) circle[radius=2pt];
\fill (2.5,0) circle[radius=2pt];
\fill (3,0) circle[radius=2pt];
\fill (3.5,0) circle[radius=2pt];

\begin{scope}
    \clip (0,0) rectangle (3.5,2.5);
    \draw (1.5,0) circle(0.5);
    \draw (1.5,0) circle(1.0);
    \draw (1.5,0) circle(1.5);
    \draw (1.75,0) circle(0.25);
    \draw (1.75,0) circle(0.75);
    \draw (1.75,0) circle(1.25);
    \draw (1.75,0) circle(1.75);

\end{scope}

\node[align=center] at (0,-0.25) {$1$};
\node[align=center] at (0.5,-0.25) {$2$};
\node[align=center] at (1,-0.25) {$3$};
\node[align=center] at (1.5,-0.25) {$4$};
\node[align=center] at (2,-0.25) {$5$};
\node[align=center] at (2.5,-0.25) {$6$};
\node[align=center] at (3,-0.25) {$7$};
\node[align=center] at (3.5,-0.25) {$8$};
\end{tikzpicture}
}
\caption{$P^{>,<}_8$}
\end{subfigure}
\begin{subfigure}[c]{0.48\textwidth}
\centering
\scalebox{1.0}{%
\begin{tikzpicture}
\fill (0,0) circle[radius=2pt];
\fill (0.5,0) circle[radius=2pt];
\fill (1,0) circle[radius=2pt];
\fill (1.5,0) circle[radius=2pt];
\fill (2,0) circle[radius=2pt];
\fill (2.5,0) circle[radius=2pt];
\fill (3,0) circle[radius=2pt];
\fill (3.5,0) circle[radius=2pt];

\begin{scope}
    \clip (0,0) rectangle (3.5,2.5);
    \draw (1.0,0) circle(1.0);
    \draw (1.5,0) circle(1.0);
    \draw (2.0,0) circle(1.0);
    \draw (2.5,0) circle(1.0);
    \draw (1.25,0) circle(1.25);
    \draw (1.75,0) circle(1.25);
    \draw (2.25,0) circle(1.25);

\end{scope}

\node[align=center] at (0,-0.25) {$1$};
\node[align=center] at (0.5,-0.25) {$2$};
\node[align=center] at (1,-0.25) {$3$};
\node[align=center] at (1.5,-0.25) {$4$};
\node[align=center] at (2,-0.25) {$5$};
\node[align=center] at (2.5,-0.25) {$6$};
\node[align=center] at (3,-0.25) {$7$};
\node[align=center] at (3.5,-0.25) {$8$};
\end{tikzpicture}
}
\caption{$P^{>,>}_8$}
\end{subfigure}
\caption{The four ordered paths on $8$ vertices of the form $P_8^{r_1,r_2}$.}\label{fig:orederpaths}
\end{figure}
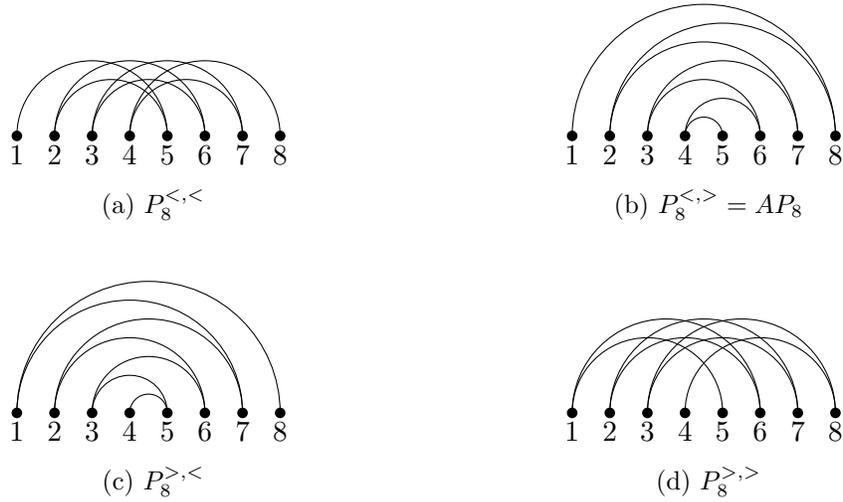

Note that $P_n^{>,<}$ is isomorphic to $\AP_n$ by reversing the order of the vertices, so Theorems~\ref{th:ramsey} and~\ref{th:turan} directly extend to this path.
\begin{corollary}
Let $n$ be even. Then $R_{<}(P_n^{>,<})\leq2n-2+\floor{\frac{\sqrt{2(n-2)^2+(-1)^n}-1}{2}}$.
\end{corollary}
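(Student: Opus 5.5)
The corollary follows from Theorem~\ref{th:ramsey} by a symmetry argument. Reversing the vertex order preserves ordered Ramsey numbers, and this reversal turns $P_n^{>,<}$ into $\AP_n$ (up to relabelling).

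Let $\sigma:[N]\to[N]$ be the reversal $\sigma(i)=N+1-i$. For an ordered graph $H$ on $[m]$, let $H^{\mathrm{rev}}$ be its image under $i\mapsto m+1-i$. Given a red/blue colouring $c$ of $K_N^<$, define the colouring $c'(\sigma(i)\sigma(j))=c(ij)$. Then $c$ contains a monochromatic copy of $H$ if and only if $c'$ contains a monochromatic copy of $H^{\mathrm{rev}}$: composing an order-preserving embedding with $\sigma$ on both sides yields an order-preserving embedding, and edge colours are kept. Since $c\mapsto c'$ is a bijection on colourings, $R_<(H)=R_<(H^{\mathrm{rev}})$.

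Next I check that $(P_n^{>,<})^{\mathrm{rev}}=\AP_n$ for $n=2k$. In $P_n^{>,<}$ the path starts in $A=[k]$ and alternates between $A$ and $B=\{k+1,\dots,2k\}$. It meets $A$ in decreasing order $k,k-1,\dots,1$ and $B$ in increasing order $k+1,\dots,2k$, so the vertex sequence is $k,k+1,k-1,k+2,\dots,1,2k$. Applying $i\mapsto 2k+1-i$ gives $k+1,k,k+2,k-1,\dots,2k,1$. Reading this sequence backwards gives $1,2k,2,2k-1,\dots,k,k+1$, which is exactly $\AP_n$. A path is the same graph traversed in either direction, so the two ordered graphs coincide.

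Combining these, $R_<(P_n^{>,<})=R_<(\AP_n)$, and Theorem~\ref{th:ramsey} gives the claimed bound. The only step needing care is the index bookkeeping in the reversal, in particular that the traversal direction of the path does not matter.
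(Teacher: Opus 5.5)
Your proposal is correct and follows the paper's own argument: reversing the vertex order maps $P_n^{>,<}$ onto $\AP_n$, ordered Ramsey numbers are invariant under this reversal, and so the bound of Theorem~\ref{th:ramsey} carries over. Your explicit check of the reversal, including that the traversal direction of the path does not matter, is a correct filling-in of details that the paper leaves implicit.
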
    

\begin{corollary}
Let $n$ be even and $N \geq n$. Then $\mathrm{ex}_<(N,P_n^{>,<})=\binom{n-1}{2}+(n-2)(N-n+1)$.
\end{corollary}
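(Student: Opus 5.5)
This corollary follows from Theorem~\ref{th:turan} once we show that reversing the vertex order turns $P_n^{>,<}$ into $\AP_n$. Let $\rho$ be the reversal $i \mapsto N+1-i$ on $[N]$, and write $G^{\mathrm{rev}}$ for the ordered graph with edge set $\{\rho(i)\rho(j) : ij \in E(G)\}$. For ordered graphs $G$ on $[N]$ and $H$ on $[n]$, the map $f$ is an order-preserving embedding of $H$ into $G$ exactly when $\rho_N \circ f \circ \rho_n$ is an order-preserving embedding of $H^{\mathrm{rev}}$ into $G^{\mathrm{rev}}$. Reversal is an involution that preserves edge counts, so $G \mapsto G^{\mathrm{rev}}$ is a bijection between $H$-free and $H^{\mathrm{rev}}$-free ordered graphs on $[N]$. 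Hence $\ex_<(N,H)=\ex_<(N,H^{\mathrm{rev}})$.

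It remains to check that $(P_n^{>,<})^{\mathrm{rev}} \cong \AP_n$ for $n=2k$. Let $A=[k]$ and $B=[k+1,2k]$, as in Section~\ref{sec1.3}. In $P_n^{>,<}$ the path alternates between $A$ and $B$ and starts at its endpoint in $A$. Along the way it meets the vertices of $A$ in decreasing order and those of $B$ in increasing order, so the sequence is $k, k+1, k-1, k+2, \dots, 1, 2k$.

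Applying $i \mapsto 2k+1-i$ gives $k+1, k, k+2, k-1, \dots, 2k, 1$. Read backwards, this is $1, 2k, 2, 2k-1, \dots, k, k+1$, which is exactly the defining sequence of $\AP_n$. So the reversed path is $\AP_n$ as an ordered graph; the fact that it is traversed in the opposite direction is irrelevant, since a path is determined by its edge set.

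Substituting into Theorem~\ref{th:turan} gives $\ex_<(N,P_n^{>,<})=\ex_<(N,\AP_n)=\binom{n-1}{2}+(n-2)(N-n+1)$ for all $N\ge n$.

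The argument is routine. The only step needing care is the index bookkeeping in the reversal identity, in particular that the reversed traversal sequence matches the definition of $\AP_n$ exactly, including which endpoint is the starting vertex.
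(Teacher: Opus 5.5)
Your proof is correct and follows the paper's approach: the paper only remarks that reversing the vertex order turns $P_n^{>,<}$ into $\AP_n$ and then invokes Theorem~\ref{th:turan}. You supply the details the paper leaves implicit, namely that $\ex_<$ is invariant under reversal and the explicit check of the vertex sequence $k, k+1, k-1, \dots, 1, 2k$, and both are correct.
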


For the remaining two ordered paths, we can prove the following upper bounds on their ordered Ramsey and Turán numbers. The results are not sharp, and obtaining the precise values would be an interesting future research direction.
\begin{proposition}\label{prop:ramsey}
Let $n$ be even and $P \in \{P_n^{<,<},P_n^{>,>}\}$. Then $R_<(P)\leq 3n-4$.
\end{proposition}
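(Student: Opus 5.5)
The plan is to work only with the bipartite part of the colouring across a single cut. I treat $P=P_n^{<,<}$ first, with $n=2k$ and $N=3n-4=6k-4$.

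\textbf{Reformulation.} Write the path as $a_1b_1a_2b_2\dots a_kb_k$ with $a_1<\dots<a_k<b_1<\dots<b_k$. Its edges are $a_ib_i$ $(1\le i\le k)$ and $b_ia_{i+1}$ $(1\le i\le k-1)$. For $P_n^{>,>}$, after renaming the vertices increasingly, the edges are $a_ib_i$ and $a_ib_{i+1}$. In both cases the copy is determined by a ``staircase'' in a $0/1$ matrix. I would cut $[N]$ into $X=[1,m]$ and $Y=[m+1,N]$ with $m\approx N/2$. I would then view the colouring of $X\times Y$ as a red/blue grid with rows indexed by $X$ and columns by $Y$. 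A red copy of $P_n^{<,<}$ with $A\subseteq X$, $B\subseteq Y$ is then a sequence of $n-1$ red cells $(a_1,b_1),(a_2,b_1),(a_2,b_2),(a_3,b_2),\dots$. Consecutive cells alternately share a column and a row, and both coordinates strictly increase. For $P_n^{>,>}$ the cell sequence is $(a_1,b_1),(a_1,b_2),(a_2,b_2),(a_2,b_3),\dots$, i.e.\ the same picture with the roles of rows and columns swapped, so one argument handles both paths.

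\textbf{Main step.} Mimicking the Erd\H{o}s--Szekeres argument, I would assign to each cell $c$ two labels $r(c)$ and $s(c)$. These are the numbers of cells in the longest staircase of the colour of $c$ that ends at $c$, with the last step arriving vertically, respectively horizontally. If no monochromatic staircase of length $n-1$ exists, all labels are at most $n-2$. I would then exploit a monotonicity property. Along a fixed row, the cells of one colour have non-decreasing horizontal-arrival labels. Moreover, a red cell to the right of a red cell $c$ in the same row gets label at least $r(c)+1$. I would also use the complementary statement along columns. Summing these constraints over a row and a column gives a linear inequality in $m$ and $N-m$. The aim is to show that it fails once $|X|+|Y|=6k-4$.

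\textbf{Obstacle and fallback.} I expect the main obstacle to be that the grid argument alone may need more than $6k-4$ vertices, because it ignores the edges inside $X$ and inside $Y$. If so, my first fallback is a greedy induction on $k$, in the spirit of the approach to \cref{th:ramsey} borrowed from Balko, Jel\'inek and Valtr. I would maintain a red partial path ending in $A$ and a blue partial path ending in $A$. I would sweep the left pointer upwards and the right pointer downwards, charging each ``wasted'' vertex to one of the two colours. Each colour can waste at most about $k-1$ vertices on each side before its path is forced to extend. This accounting gives a budget of roughly $2n$ used vertices plus $n$ wasted ones, which is exactly the form $3n-4$. The delicate point is making the wasted-vertex charging tight to the additive constant $-4$. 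I would check this on small $k$ (e.g.\ $k=2$, $N=8$) to calibrate.
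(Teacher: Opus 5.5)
The reformulation as monotone staircases in the grid $X\times Y$ is correct, and it is exactly the paper's setting: $A=[1,M]$, $B=[M+1,2M]$ with $M=3k-2$, using only the cross edges. Your diagnosis of the obstacle is therefore backwards, since the edges inside $X$ and $Y$ are never needed. The genuine gap is that the main step contains no count.

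Your two labels $r,s$ do not give an Erd\H{o}s--Szekeres injection. The inequalities $s(c')\ge s(c)$ and $s(c')\ge r(c)+1$ allow equal pairs. For example, if row $1$ is entirely red, every red cell of that row except the leftmost has the same pair $(r,s)$. So summing these constraints bounds nothing.

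There is also a smaller issue. A staircase of $n-1$ cells that begins with the other kind of step is a copy of the \emph{other} path. Avoiding $P$ therefore does not force both labels below $n-1$, and the labels should count only staircases that begin like $P$.

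You can repair this with a single label $\lambda(c)$: the longest monochromatic staircase ending at $c$ that begins with a vertical step (for $P_n^{<,<}$). This does give injectivity. Within one colour, odd values strictly increase down each column and even values strictly increase along each row. But a prefix label carries only one-sided position information. $\lambda(c)=2i-1$ forces the column to be at least $M+i$, and $\lambda(c)=2i$ forces the row to be at least $i+1$, with no upper bounds. The resulting count, $M^2\le 2(k-1)(2M-k+1)$, is not contradictory at $M=3k-2$ once $k\ge 4$. It only gives $N\approx(2+\sqrt2)n$.

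The paper instead uses a peeling procedure with two-sided restrictions:
\begin{itemize}
\item in round $2i-1$, every column in $I_i=[M+i,N-k+i]$ (the columns that can serve as $b_i$ in a full copy) deletes its topmost remaining cell of each colour;
\item in round $2i$, every row in $J_i=[i+1,M-k+i+1]$ deletes its leftmost remaining cell of each colour;
\item the $(k-1)^2$ grey cells that lie in no copy (e.g.\ row $i\le k-1$ against the last $k-i$ columns) are discarded in advance.
\end{itemize}
An induction as in Claim~\ref{claim:algorithm} shows that rows $[1,i]$ and columns $[M+1,M+i]$ are still cleared despite the restricted rounds. Hence any surviving non-grey cell backtracks to a monochromatic copy. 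The count $M^2>4(k-1)(M-k+1)+(k-1)^2$, i.e.\ $(M-3(k-1))(M-(k-1))>0$, then gives $M=3k-2$. Compared with the one-sided count this saves an extra $(k-1)^2$. That saving is exactly what moves the threshold from $(2+\sqrt2)(k-1)$ to $3(k-1)$.

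Your fallback does not supply this either. A left pointer moving up and a right pointer moving down is the geometry of $\AP_n$, whose $B$-vertices are visited in decreasing order. In $P_n^{<,<}$ both sides increase. Moreover, the charging that is supposed to produce the constant $-4$ is never specified.
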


\begin{theorem}\label{thm:turanother}
Let $n$ and $N$ be even, $N \geq n$ and $P \in \{P_n^{<,<},P_n^{>,>}\}$. Then, $\mathrm{ex}_<(N,P)\leq nN(\log_2(N/n)+2)$.
\end{theorem}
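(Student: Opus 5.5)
We would prove \cref{thm:turanother} by divide and conquer on the vertex interval, with a linear bipartite lemma as the key input. Write $f(N)$ for the maximum number of edges of an ordered graph on $[N]$ avoiding $P$ (for a fixed $P\in\{P_n^{<,<},P_n^{>,>}\}$, $n=2k$). Split $[N]$ into the left half $X$ and the right half $Y$, each an interval. The edges of $G$ fall into three classes: edges inside $X$, edges inside $Y$, and crossing edges between $X$ and $Y$. The first two classes are controlled recursively by $f(|X|)$ and $f(|Y|)$. The heart of the matter is the following bipartite statement.

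\emph{Crossing lemma.} If the crossing edges number more than $C n(|X|+|Y|)$ for a suitable absolute constant $C$ (we aim for $C\le 1$), then $G$ contains a copy of $P$ with its $A$-part in $X$ and its $B$-part in $Y$. Since every $A$-vertex precedes every $B$-vertex, this is a genuine ordered copy.

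Granting the lemma, the recursion is $f(N)\le 2f(N/2)+nN$. We iterate it about $\log_2(N/n)$ times until the blocks have size between $n$ and $2n$. Each block of size $M<2n$ trivially has at most $\binom{M}{2}\le nM$ edges, so the bottom level contributes at most $2nN$ in total, well within the bound. Summing $nN$ per level then gives $f(N)\le nN(\log_2(N/n)+2)$. Some care with parity and rounding of the halves is needed, which is presumably why $N$ is assumed even; using intervals of sizes $\lceil N/2\rceil$ and $\lfloor N/2\rfloor$ should suffice.

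For the crossing lemma, we view the crossing edges as a $0/1$ matrix with rows indexed by $X$ and columns by $Y$. Take $P_n^{<,<}$ first: a copy $a_1b_1a_2b_2\dots a_kb_k$ with $a_1<\dots<a_k$ in $X$ and $b_1<\dots<b_k$ in $Y$ is a monotone staircase of $n-1$ ones. The staircase alternates steps within a column, from row $a_i$ to a larger row $a_{i+1}$, and steps within a row, to a larger column. We would prove that a staircase-free matrix has at most about $n(|X|+|Y|)$ ones by a labelling argument. Give each one $e$ a label $\ell(e)$, the number of edges of the longest staircase ending at $e$. If $P$ is absent, all labels lie in $[n-2]$. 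The key monotonicity is as follows. Suppose $e=(a,b)$ is reached by a vertical step, and $e'=(a,b')$ with $b'>b$ lies in the same row. Then the staircase ending at $e$ extends by a horizontal step to $e'$, so $\ell(e')\ge\ell(e)+1$; the symmetric statement holds in columns. Hence in each row, the ones reached through a given label class are strictly increasing in label along the row, so each row has at most $n-2$ ones of each parity type that are not the first in their row. The same holds for columns. Charging each one either to its row or to its column, according to the parity of its label (which tells whether its last step was horizontal or vertical), gives at most $(n-2)(|X|+|Y|)$ ones plus lower-order terms.

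For $P_n^{>,>}$ the same argument applies with both orders reversed: the staircase runs in decreasing rows and columns, equivalently after flipping the matrix in both coordinates. Note that $P_n^{>,>}$ is not the reversal of $P_n^{<,<}$, so this symmetry has to be applied inside the bipartite lemma rather than to the whole graph. The main obstacle is making the charging in the crossing lemma rigorous. One must check that every one is charged, via its final step's direction, to a line along which labels strictly increase, and that the starting edge of the staircase is handled correctly; if necessary, one first deletes the first one in each row. We expect this bookkeeping, rather than the recursion, to be the delicate step.
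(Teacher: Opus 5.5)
Your proposal is correct, and its skeleton is the paper's. You split $[N]$ into two halves, recurse inside each half, and bound the crossing edges by a linear ordered bipartite Tur\'an bound, giving $f(N)\le 2f(N/2)+O(nN)$. You differ from the paper in how you prove the bipartite bound and in how you handle general $N$.

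The paper proves the bipartite statement (Theorem~\ref{thm:bipturan}) with the same greedy deletion scheme it uses for Theorems~\ref{th:ramsey} and~\ref{th:turan}. It sets aside $(n/2-1)^2$ grey entries, deletes the extreme remaining entry in prescribed rows and columns for $n-2$ rounds, and backtracks from a surviving entry to a copy of $P$. This gives the exact value $(n/2-1)(N-n/2+1)$ with a matching construction, but only for equal parts. So the paper first proves $\frac12 nN(\log_2(N/n)+1)$ for $N=2^{t-1}n$ and then reaches general $N$ by monotonicity; the $+2$ in the statement absorbs that rounding.

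Your longest-staircase labelling is a direct counting argument valid for arbitrary $|X|,|Y|$. Uneven halving therefore works with no rounding step, and you recover essentially the paper's constant once the bookkeeping is fixed. Define $\ell(e)$ using only staircases whose first step is vertical (for $P_n^{<,<}$; horizontal for $P_n^{>,>}$, or use your flip). Only under this convention are both of your claims true: that labels lie in $[n-2]$, and that the parity of a label determines the last step. If both starting directions are allowed, a staircase of $n-1$ ones starting horizontally is a copy of $P_n^{>,>}$ rather than $P_n^{<,<}$. Labels can then reach $n-1$, and parity carries no information about the last step.

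With the convention in place, odd-labelled ones (including label $1$) have strictly increasing labels down each column, and even-labelled ones have strictly increasing labels along each row. Hence there are at most $(n/2-1)(|X|+|Y|)$ crossing edges, with no exceptional first ones to delete. For $|X|=|Y|$ this exceeds the paper's exact value only by the $(n/2-1)^2$ grey entries.
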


We conclude by noting that there are examples demonstrating that the ordered Tur\'an numbers of $P^{<,<}_n$ and $P^{>,>}_n$ can differ both with each other and from that of $P^{<,>}_n$.
Moreover, we can determine the exact ordered bipartite Tur\'an number for all the four ordered paths above.
We refer the reader to Section~\ref{sec4} for more details.

\bigskip
\noindent {\bf Remark.} While working on this project, we learnt of independent work by Bar\'at, Freschi, and T\'oth~{\cite{barat2025matchings}} who, using similar methods, proved the weaker bound $R_<(\AP_n)\leq 3n+3$ and obtained the same Tur\'an result of Theorem~\ref{th:turan}.

\bigskip
\noindent \textbf{Notation.}
For any positive integer $n$, we use $[n]$ to denote the set of integers $\{1,2,\ldots,n\}$. For any two integers $a\leq b$, we use $[a,b]$ to denote the set $\{a,a+1,\ldots,b\}$ of all integers from $a$ to $b$. We use $K_n^<$ to denote the ordered complete graph on $[n]$ with the usual ordering.

Following previous work (see~e.g.~\cite{balko2013ramsey,neidinger2019ramsey,poljak2023off}), we often visualise a red/blue edge-colouring of $K_N^<$ through an $N \times N$ upper triangular matrix obtained as follows: Given a red/blue edge-coloured~$K_N^<$, for $1 \le i < j \le N$, we colour the entry~$(i,j)$ with the colour of the edge~$ij$.
Similarly, for an ordered graph~$H$ whose vertex set is a subset of $[N]$, we can represent it as follows: For every $1\leq i<j\leq N$, we colour the entry $(i,j)$ green if and only if $ij$ is an edge in $H$.
As an example, for an even number $n<N$, a copy of the alternating path $\AP_n$ in $K_N^<$ would correspond to the $n-1$ entries

\[(i_1, i_n), (i_2, i_n),(i_2,i_{n-1}),\ldots,(i_{n/2}, i_{n/2+2}),(i_{n/2}, i_{n/2+1})\]
with $i_1 < i_2 < \dots < i_n$. 
In particular, $\AP_n$ resembles a ``staircase'' in the matrix representation.
Several illustrative examples are shown in Figure~\ref{fig:matrices}.

\begin{figure}[h]
\centering
\begin{subfigure}[c]{0.48\textwidth}
\centering
\scalebox{1.25}{%
\begin{tikzpicture}
\fill (0,0) circle[radius=2pt];
\fill (0.5,0) circle[radius=2pt];
\fill (1,0) circle[radius=2pt];
\fill (1.5,0) circle[radius=2pt];
\fill (2,0) circle[radius=2pt];
\fill (2.5,0) circle[radius=2pt];
\fill (3,0) circle[radius=2pt];

\begin{scope}
    \clip (0,0) rectangle (3,2);
    \draw (1.5,0) circle(1.5);
    \draw (1.75,0) circle(1.25);
    \draw (1.5,0) circle(1);
    \draw (1.75,0) circle(0.75);
    \draw (1.5,0) circle(0.5);
    \draw (1.75,0) circle(0.25);
\end{scope}

\node[align=center] at (0,-0.25) {$1$};
\node[align=center] at (0.5,-0.25) {$2$};
\node[align=center] at (1,-0.25) {$3$};
\node[align=center] at (1.5,-0.25) {$4$};
\node[align=center] at (2,-0.25) {$5$};
\node[align=center] at (2.5,-0.25) {$6$};
\node[align=center] at (3,-0.25) {$7$};
\end{tikzpicture}
}
\end{subfigure}
\begin{subfigure}[c]{0.48\textwidth}
\centering
\scalebox{1.0}{%
\begin{tikzpicture}

\fill[green,opacity=0.7] (3.0,1.5) -- (3.0,2.5) -- (3.5,2.5) -- (3.5,1.5);
\fill[green,opacity=0.7] (3.5,2.0) -- (3.5,3.0) -- (4.0,3.0) -- (4.0,2.0);
\fill[green,opacity=0.7] (4.0,3.5) -- (4.0,2.5) -- (4.5,2.5) -- (4.5,3.5);

\node[align=center] at (1.25,3.75) {$1$};
\node[align=center] at (1.75,3.75) {$2$};
\node[align=center] at (2.25,3.75) {$3$};
\node[align=center] at (2.75,3.75) {$4$};
\node[align=center] at (3.25,3.75) {$5$};
\node[align=center] at (3.75,3.75) {$6$};
\node[align=center] at (4.25,3.75) {$7$};

\node[align=center] at (0.75,3.25) {$1$};
\node[align=center] at (0.75,2.75) {$2$};
\node[align=center] at (0.75,2.25) {$3$};
\node[align=center] at (0.75,1.75) {$4$};
\node[align=center] at (0.75,1.25) {$5$};
\node[align=center] at (0.75,0.75) {$6$};
\node[align=center] at (0.75,0.25) {$7$};


\draw (4.0,0.5) -- (4.5,0.5);
\draw (3.5,1.0) -- (4.5,1.0);
\draw (3.0,1.5) -- (4.5,1.5);
\draw (2.5,2.0) -- (4.5,2.0);
\draw (2.0,2.5) -- (4.5,2.5);
\draw (1.5,3.0) -- (4.5,3.0);
\draw (1.5,3.5) -- (4.5,3.5);


\draw (4.5,0.5) -- (4.5,3.5);
\draw (4.0,0.5) -- (4.0,3.5);
\draw (3.5,1.0) -- (3.5,3.5);
\draw (3.0,1.5) -- (3.0,3.5);
\draw (2.5,2.0) -- (2.5,3.5);
\draw (2.0,2.5) -- (2.0,3.5);
\draw (1.5,3.0) -- (1.5,3.5);
\end{tikzpicture}
}
\end{subfigure}

\vspace{10pt}

\centering
\begin{subfigure}[c]{0.48\textwidth}
\centering
\scalebox{1.25}{%
\begin{tikzpicture}
\begin{scope}
    \clip (0,0) rectangle (2.5,2);
    
    \draw[blue] (0.25,0) circle(0.25);
    \draw[red] (0.75,0) circle(0.25);
    \draw[green, line width=3, opacity=0.6]
    (1.25,0) circle(0.25);
    \draw[blue] (1.25,0) circle(0.25);
    \draw[blue] (1.75,0) circle(0.25);
    \draw[red] (2.25,0) circle(0.25);
    
    \draw[red] (0.5,0) circle(0.5);
    \draw[blue] (1.0,0) circle(0.5);
    \draw[red] (1.5,0) circle(0.5);
    \draw[red] (2.0,0) circle(0.5);

    \draw[blue] (0.75,0) circle(0.75);
    \draw[red] (1.25,0) circle(0.75);
    \draw[green, line width=3, opacity=0.6]
    (1.75,0) circle(0.75);
    \draw[blue] (1.75,0) circle(0.75);

    \draw[blue] (1.0,0) circle(1.0);
    \draw[green, line width=3, opacity=0.6]
    (1.5,0) circle(1.0);
    \draw[blue] (1.5,0) circle(1.0);

    \draw[red] (1.25,0) circle(1.25);
\end{scope}

\fill (0,0) circle[radius=2pt];
\fill (0.5,0) circle[radius=2pt];
\fill (1,0) circle[radius=2pt];
\fill (1.5,0) circle[radius=2pt];
\fill (2,0) circle[radius=2pt];
\fill (2.5,0) circle[radius=2pt];

\node[align=center] at (0,-0.25) {$1$};
\node[align=center] at (0.5,-0.25) {$2$};
\node[align=center] at (1,-0.25) {$3$};
\node[align=center] at (1.5,-0.25) {$4$};
\node[align=center] at (2,-0.25) {$5$};
\node[align=center] at (2.5,-0.25) {$6$};
\end{tikzpicture}
}

\end{subfigure}
\begin{subfigure}[c]{0.48\textwidth}
\centering
\scalebox{1.0}{%
\begin{tikzpicture}

\fill[blue,opacity=0.7] (2.0,3.0) -- (2.0,3.5) -- (1.5,3.5) -- (1.5,3.0);
\fill[red,opacity=0.7] (2.0,3.0) -- (2.0,3.5) -- (2.5,3.5) -- (2.5,3.0);
\fill[blue,opacity=0.7] (3.0,3.0) -- (3.0,3.5) -- (2.5,3.5) -- (2.5,3.0);
\fill[blue,opacity=0.7] (3.0,3.0) -- (3.0,3.5) -- (3.5,3.5) -- (3.5,3.0);
\fill[red,opacity=0.7] (4.0,3.0) -- (4.0,3.5) -- (3.5,3.5) -- (3.5,3.0);

\fill[red,opacity=0.7] (2.0,3.0) -- (2.0,2.5) -- (2.5,2.5) -- (2.5,3.0);
\fill[blue,opacity=0.7] (3.0,3.0) -- (3.0,2.5) -- (2.5,2.5) -- (2.5,3.0);
\fill[red,opacity=0.7] (3.0,3.0) -- (3.0,2.5) -- (3.5,2.5) -- (3.5,3.0);
\filldraw[fill=white, draw=green, opacity=0.7, line width=1mm] (3.55, 2.55) rectangle +(0.4, 0.4);
\fill[blue,opacity=0.7] (3.6,2.9) -- (3.6,2.6) -- (3.9,2.6) -- (3.9,2.9);

\filldraw[fill=white, draw=green, opacity=0.7, line width=1mm] (2.55, 2.05) rectangle +(0.4, 0.4);
\fill[blue,opacity=0.7] (2.9,2.1) -- (2.9,2.4) -- (2.6,2.4) -- (2.6,2.1);
\fill[red,opacity=0.7] (3.0,2.0) -- (3.0,2.5) -- (3.5,2.5) -- (3.5,2.0);
\filldraw[fill=white, draw=green, opacity=0.7, line width=1mm] (3.55, 2.05) rectangle +(0.4, 0.4);
\fill[blue,opacity=0.7] (3.6,2.1) -- (3.6,2.4) -- (3.9,2.4) -- (3.9,2.1);

\fill[blue,opacity=0.7] (3.0,2.0) -- (3.0,1.5) -- (3.5,1.5) -- (3.5,2.0);
\fill[red,opacity=0.7] (4.0,2.0) -- (4.0,1.5) -- (3.5,1.5) -- (3.5,2.0);

\fill[red,opacity=0.7] (4.0,1.0) -- (4.0,1.5) -- (3.5,1.5) -- (3.5,1.0);

\node[align=center] at (1.25,3.75) {$1$};
\node[align=center] at (1.75,3.75) {$2$};
\node[align=center] at (2.25,3.75) {$3$};
\node[align=center] at (2.75,3.75) {$4$};
\node[align=center] at (3.25,3.75) {$5$};
\node[align=center] at (3.75,3.75) {$6$};

\node[align=center] at (0.75,3.25) {$1$};
\node[align=center] at (0.75,2.75) {$2$};
\node[align=center] at (0.75,2.25) {$3$};
\node[align=center] at (0.75,1.75) {$4$};
\node[align=center] at (0.75,1.25) {$5$};
\node[align=center] at (0.75,0.75) {$6$};

\draw (3.5,1.0) -- (4.0,1.0);
\draw (3.0,1.5) -- (4.0,1.5);
\draw (2.5,2.0) -- (4.0,2.0);
\draw (2.0,2.5) -- (4.0,2.5);
\draw (1.5,3.0) -- (4.0,3.0);
\draw (1.5,3.5) -- (4.0,3.5);

\draw (4.0,1.0) -- (4.0,3.5);
\draw (3.5,1.0) -- (3.5,3.5);
\draw (3.0,1.5) -- (3.0,3.5);
\draw (2.5,2.0) -- (2.5,3.5);
\draw (2.0,2.5) -- (2.0,3.5);
\draw (1.5,3.0) -- (1.5,3.5);
\end{tikzpicture}
}
\end{subfigure}
\caption{Above: The ordered path $\AP_7$ and its matrix representation. Below: A red/blue edge-colouring of $K_6^<$ and its matrix representation. A monochromatic subgraph isomorphic to $\AP_4$ (in fact, the unique one) is highlighted in green.}\label{fig:matrices}
\end{figure}
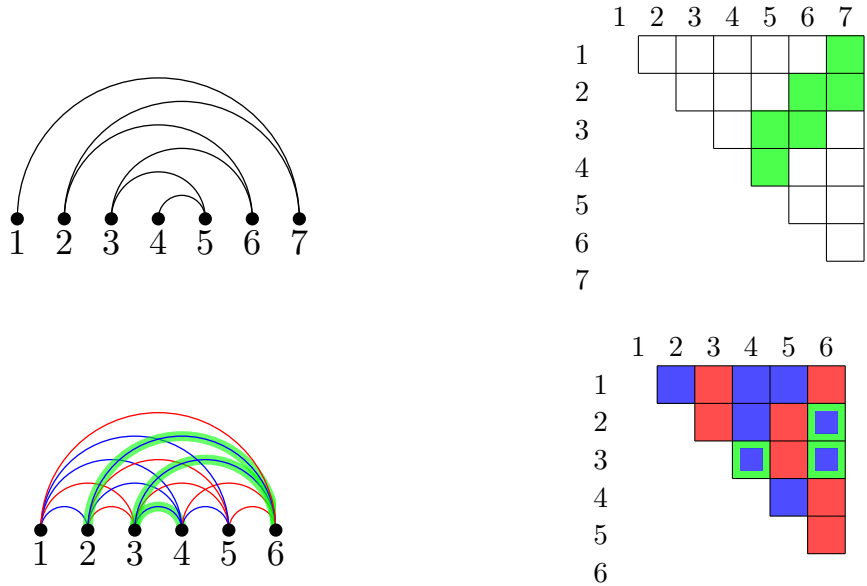

\medskip
\noindent \textbf{Organization.}
The rest of the paper is organized as follows. We prove Theorem~\ref{th:ramsey} in Section~\ref{sec2}, Theorem~\ref{th:turan} in Section~\ref{sec3}, and Proposition~\ref{prop:ramsey} and Theorem~\ref{thm:turanother} in Section~\ref{sec4}.

\section{Proof of Theorem~\ref{th:ramsey}}\label{sec2}

In this section we prove Theorem~\ref{th:ramsey} using a refinement of the arguments of Balko, Jel\'inek, and Valtr in~\cite[Lemma 5.1]{balko2019ordered}.
They showed that any ordered graph $G$ on $N \geq n/ \varepsilon$ vertices and at least $\varepsilon N^2$ edges contains a copy of $\AP_n$.
To this end, they designed an edge-deletion algorithm with the following property: If any edge of $G$ remains at the end, then $G$ contains a copy of $\AP_n$.
The result then follows by bounding the total number of edges removed by the algorithm.

Similarly, here we want to show that, for $N$ as in Theorem~\ref{th:ramsey}, every red/blue edge-colouring of $K_N^<$ contains a monochromatic copy of $\AP_n$.
The improvement in our approach comes from the following idea.
Instead of looking for any monochromatic $\AP_n$, we will restrict our attention to those where vertices labelled $1, 2, \dots, \lceil n/2\rceil$ lie in a set $A \subseteq [N]$ and the vertices labelled $\lceil n/2 \rceil+1, \lceil n/2 \rceil+2,\dots,n$ belong to a set $B \subseteq [N]$, with $A$ and $B$ not necessarily disjoint.
Then, we identify and count some edges, which we will call grey edges, that cannot be present in any such copy of $\AP_n$.
After removing these grey edges we apply a similar edge-deletion algorithm on the remaining edges between $A$ and $B$.
An example application of the algorithm we use is illustrated in Figure~\ref{fig:thm2}.

\begin{proof}[Proof of Theorem~\ref{th:ramsey}]
If $n$ is even, set $N=2n-2+\floor{\frac{\sqrt{2n^2-8n+9}-1}{2}}$ and ${{a}}=N - n+1$.
If $n$ is odd, set $N=2n-2+\floor{\frac{\sqrt{2n^2-8n+7}-1}{2}}$ and ${{a}}=N-n+2$.
In both cases, set $A=[1,{{a}}]$ and $B=[n,N]$, and fix a red/blue edge-colouring of $K_N^<$. 

To compute the number $e(A,B)$ of edges between $A$ and $B$, we subtract from $A \times B$ all pairs of the form $(x,x)$, which are not edges, and account for pairs $(x,y)$ such that $(y,x)$ is also in $A \times B$, since these are double-counted.
Hence,
\begin{align*}
e(A,B)&= |A||B|-|A\cap B|-\binom{|A\cap B|}{2}\\
&= {{a}} (N-n+1) - ({{a}}-n+1)-\binom{{{a}}-n+1}{2}\\
&= \begin{cases} \frac12 {N(N-1)}- (n-1)(n-2) & \text{if $n$ is even,}\\
\frac12 N(N-1) - (n-2)^2& \text{if $n$ is odd.}\end{cases}
\end{align*}

We will run an algorithm to search for a monochromatic copy of $\AP_n$ whose first $\ceil{n/2}$ vertices belong to $A$ and the last $\floor{n/2}$ vertices lie in $B$. 

First, we identify a set $F$ of edges in $E(A,B)$ that cannot be present in any (monochromatic or not) such copy of $\AP_n$. We will call them \emph{grey edges}.
For example, suppose that $1$ is a vertex in such an $\AP_n$, which we denote by $P$.
Then $1$ has to be the leftmost endpoint of $P$ and its neighbour in $P$, say $w$, must belong to $B$ and be the rightmost vertex among those in $P$.
The path $P$ has $n-2$ vertices between $1 $ and $w$, $\floor{n/2}-1$ of which belong to~$B$.
This implies that $w$ cannot be any of the first $\floor{n/2}-1$ vertices of $B$, and thus all the edges between $1 \in A$ and the first $\floor{n/2}-1$ vertices of $B$ are grey edges.
Similarly, the edges between every vertex~$i \in [ 1,\floor{n/2}-1] \subseteq  A$ and the first $\floor{n/2}-i$ vertices of $B$ are grey edges, and the edges between every vertex $N-i+1 \in [N-\ceil{n/2}+3, N] \subseteq B$ and the last $\ceil{n/2}-i-1$ vertices of $A$ are grey edges. Let $F$ be this set of edges.

Observe that the edges mentioned above are pairwise distinct, so the total number of grey edges in $F$ is
\[f:=|F|=\binom{\floor{n/2}}{2}+\binom{\ceil{n/2}-1}{2}=\begin{cases}\frac14(n-2)^2 & \text{if $n$ is even,}\\\frac14(n-1)(n-3) & \text{if $n$ is odd.}\end{cases}\]

\input{thm2}

Now, we consider the following algorithm which removes edges in $E(A,B)\setminus F$ in $n-2$ steps as follows. 
In every odd step $2i-1$ with $i\in[1,\ceil{n/2}-1]$, we remove, from each vertex in $$I_i:=[\floor{3n/2}-i,N-i+1] \subseteq B,$$ the leftmost remaining red edge and the leftmost remaining blue edge to $A$ (if they exist).
In every even step $2i$ with $i\in[1,\floor{n/2}-1]$, we remove, from each vertex in $$J_i:=[i+1,{{a}}-\ceil{n/2}+i+1] \subseteq A,$$ the rightmost remaining red edge and the rightmost remaining blue edge to $B$ (if they exist).

\begin{claim}
\label{claim:algorithm}
    Let $k \in [1,n-2]$.
    If $k=2i-1$ is odd, then after step $k$, every edge in $E(A,B)$ between $[1,i]$ and $B$ is either a grey edge or has been removed.
    If $k=2i$ is even, then after step $k$, every edge in $E(A,B)$ between $A$ and $[N-i+1,N]$ is either a grey edge or has been removed.
\end{claim}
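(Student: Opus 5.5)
We prove the two statements simultaneously by induction on $k$, with the convention that after step $0$ nothing has been removed and both statements are vacuous. Since edges are never restored, whatever holds after step $k-1$ still holds after step $k$. It therefore suffices to show that step $k$ clears the one new vertex: the vertex $i$ when $k=2i-1$, and the vertex $N-i+1$ when $k=2i$. By induction, at that moment every non-grey edge of $E(A,B)$ at $[1,i-1]\subseteq A$ and at $[N-i+2,N]\subseteq B$ has already been removed. (For $k=2i$ the first range is $[1,i]$.)

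\emph{Odd step $k=2i-1$.} Let $iy$ be a non-grey edge of $E(A,B)$ that is still present, and say it is red. Since $i\le \ceil{n/2}-1<n$, we have $i\notin B$, so $i$ is the $A$-endpoint and $y\in B$.
\begin{itemize}
\item Lower bound on $y$. The grey edges at $i$ go exactly to the first $\floor{n/2}-i$ vertices of $B=[n,N]$. Hence $y\ge n+\floor{n/2}-i=\floor{3n/2}-i$. This also holds in the boundary case $n$ odd and $i=(n-1)/2$, where there are no grey edges at $i$ and the bound reads $y\ge n$.
\item Upper bound on $y$. By the induction hypothesis, no non-grey edge remains at $[N-i+2,N]$, so $y\le N-i+1$.
\end{itemize}
Thus $y\in I_i$. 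Now consider the red non-grey edges of $E(A,B)$ still present at $y$, with other endpoint $x\in A$. By induction none has $x\le i-1$, so each has $x\ge i$. Hence $iy$ is the leftmost remaining red edge from $y$ to $A$, and step $2i-1$ removes it. Blue edges are handled identically.

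A point to check here is that ``leftmost edge from $y$ to $A$'' ranges over all $x\in A$, including $x>y$ when $y\in A\cap B$. This causes no problem, since all we use is that $x\ge i$ for every remaining candidate $x$.

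\emph{Even step $k=2i$.} Let $z=N-i+1$ and let $xz$ be a remaining non-grey red edge. Since $z>a$, we have $z\notin A$, so $x\in A$ is the $A$-endpoint and $z$ the $B$-endpoint.
\begin{itemize}
\item Lower bound on $x$. After step $2i-1$, no non-grey edge remains at $[1,i]$, so $x\ge i+1$.
\item Upper bound on $x$. The grey edges at $z$ go to the last $\ceil{n/2}-i-1$ vertices of $A=[1,a]$, so $x\le a-\ceil{n/2}+i+1$.
\end{itemize}
Hence $x\in J_i$. The remaining red edges from $x$ to $B$ cannot end in $[N-i+2,N]$ by induction, so $xz$ is the rightmost one and is removed in step $2i$. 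The same applies to blue.

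The main obstacle is purely bookkeeping: the grey-edge counts must match the interval endpoints of $I_i$ and $J_i$ exactly, including the parity boundary cases. One also needs the index ranges of the odd and even steps to stay within those for which the grey edges were defined. This is the step we would check most carefully.
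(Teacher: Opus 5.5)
Your proof is correct and takes essentially the same approach as the paper. Both argue by induction on $k$, using the grey edges together with the already-cleared ranges $[1,i-1]$ (resp. $[1,i]$) and $[N-i+2,N]$ to force any surviving non-grey edge at the new vertex to have its other endpoint in $I_i$ (resp. $J_i$); that edge is then the leftmost (resp. rightmost) remaining edge of its colour and is deleted. The only differences are presentational: the paper partitions $A=[1,i]\cup J_i\cup[a-(\lceil n/2\rceil-i-2),a]$ and does $k=1$ as an explicit base case, while you take a surviving edge directly and start from a vacuous step $0$.
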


\begin{proof}
    We prove the statement by induction on $k$.
    Suppose $k=1$, then $i=1$.
    Recall from above that all the edges between $1$ and $[n,\lfloor 3n/2 \rfloor-2]$ are grey edges.
    Moreover, in step 1, the algorithm removes the leftmost red edge and the leftmost blue edge from every vertex in $[ \lfloor 3n/2 \rfloor-1, N]$ to $A$, so every edge between $[\lfloor 3n/2 \rfloor-1, N]$ and $1$ (regardless of its colour) is removed in step 1.
    Since $B=[n,\lfloor 3n/2 \rfloor-2] \cup [\lfloor 3n/2 \rfloor-1, N]$, the statement holds for $k=1$. The case when $k=2$ can be proved similarly. 

    Now suppose the statement holds up to $k-1$, we will prove the case when $k=2i$ is even. The case when $k$ is odd is analogous.
    By induction, after step $k-1$, all the edges in $E(A,B)$ between $[1,i]$ and $B$, and between $A$ and $[N-i+2,N]$ have been removed or are grey edges.
    In order to prove the statement, it suffices to show that after step $k$, every edge in $E(A,B)$ between $A$ and $N-i+1$ has been removed or is a grey edge.
    Recall that the edges between $[{{a}}-(\lceil n/2\rceil -i-2),{{a}}]$ and $N-i+1$ are grey edges.
    By assumption, the edges between $[1,i]$ and $B$ have already been removed or are grey edges, so in particular, this is true for the edges between $[1,i]$ and $N-i+1$. 
    Moreover, the algorithm removes the rightmost remaining red edge and the rightmost remaining blue edge from every vertex in $J_i$ to $B$.
    However, every vertex in $J_i\subseteq A$ is no longer connected to any vertex in $[N-i+2,N]$, so if it is still connected to $N-i+1$, then the corresponding edge (regardless of its colour) gets removed in step $k$ of the algorithm.
    Since $A=[1,i] \cup J_i \cup [{{a}}-(\lceil n/2\rceil -i-2),{{a}}]$, the statement holds for $k$ as well, finishing the induction.
\renewcommand{\qedsymbol}{$\boxdot$}
\end{proof}
\renewcommand{\qedsymbol}{$\square$}

\begin{claim}
\label{claim:no_edges}
    Suppose that an edge in $E(A,B) \setminus F$ remains after step $n-2$ of the algorithm.
    Then $K_N^<$ contains a monochromatic copy of $\AP_n$.
\end{claim}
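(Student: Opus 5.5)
Suppose an edge $e_0=x y$ with $x\in A$, $y\in B$ and $e_0\notin F$ survives step $n-2$; without loss of generality it is red. The plan is to build a red copy of $\AP_n$ backwards from $e_0$, reversing the algorithm one step at a time. Each step removes, for every vertex in its active interval, the leftmost (odd steps) or rightmost (even steps) remaining edge of \emph{each} colour. So whenever a red edge survives step $k$ at a vertex of the active interval, a red edge of the prescribed type was removed at that vertex in step $k$. Claim~\ref{claim:algorithm} is what lets us locate the surviving edges well enough to apply this.

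\emph{Backward chain.} Set $v_0$ to be the endpoint of $e_0$ active at step $n-2$. Then, for $k=n-2,n-3,\dots,1$, take the red edge removed at the current vertex in step $k$ and move to its other endpoint. If $k=2i$ is even, the current vertex $u\in A$ has a red edge to $B$ that was still present before step $k$ (either $e_0$ or the edge just followed). The algorithm removed the rightmost remaining red edge $uw$ in step $k$, so $w$ lies at or to the right of that edge's $B$-endpoint. If $k=2i-1$ is odd, the current vertex $w\in B$ had a red edge to $A$ remaining before step $k$. The algorithm removed the leftmost one $wu'$, so $u'$ lies at or to the left of the relevant $A$-endpoint. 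Reversing the sequence gives vertices $a_1,b_1,a_2,b_2,\dots$ joined by red edges, following the staircase order of $\AP_n$. The $a$'s must increase and the $b$'s decrease along the path, and we need $a_{\lceil n/2\rceil}<b_{\lfloor n/2\rfloor}$ at the middle.

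\emph{Strict monotonicity.} This is what Claim~\ref{claim:algorithm} buys us. The edge present before step $k$ at the current vertex survived steps $1,\dots,k-1$ and is not grey. Hence by Claim~\ref{claim:algorithm}, before step $2i$ its $A$-endpoint lies outside $[1,i]$ and its $B$-endpoint outside $[N-i+2,N]$, and similarly for odd steps. Combined with the leftmost/rightmost choice, the newly found edge is strictly extremal compared with the surviving one, so consecutive vertices are distinct and correctly ordered. The relations "leftmost remaining" and "rightmost remaining" translate directly into $a_j<a_{j+1}$ and $b_j>b_{j+1}$. The final edge $e_0$ is not grey, which supplies the middle inequality $a_{\lceil n/2\rceil}<b_{\lfloor n/2\rfloor}$ (or the correct crossing when $A\cap B\neq\emptyset$).

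\emph{Main obstacle.} The delicate point is checking that each vertex reached lies in the active interval $I_i$ or $J_i$ of the preceding step; otherwise no edge was removed at it and the chain breaks. For this I would use Claim~\ref{claim:algorithm} together with the grey-edge definition. A vertex $u\in A$ carrying a surviving non-grey edge before step $2i$ cannot be in $[1,i]$ (by the claim). It also cannot lie among the last $\lceil n/2\rceil-i-1$ vertices of $A$, because every edge from there to the $B$-side vertices still possible would be grey. So $u\in J_i$, and symmetrically for $I_i$. Parity of $n$ changes the interval endpoints by one, and I would handle even and odd $n$ separately, checking the boundary indices $k=1$ and $k=n-2$ explicitly. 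After $n-2$ backward steps the chain contains $n-1$ red edges on $n$ distinct ordered vertices forming $\AP_n$.
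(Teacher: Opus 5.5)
Your overall plan matches the paper's: orient the surviving edge, walk backwards through the algorithm, and use that every step removes an edge of \emph{each} colour at each active vertex. However, the step you flag as the main obstacle rests on a false claim. Greyness plus Claim~\ref{claim:algorithm} does \emph{not} keep a vertex carrying a surviving non-grey edge out of the last $\lceil n/2\rceil-i-1$ vertices of $A$.

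Take $u=a-\lceil n/2\rceil+i+2$, the first vertex beyond $J_i$. It is far too large to be one of the $A$-vertices in $[1,\lfloor n/2\rfloor-1]$ that carry grey edges, so its only grey edges go to $[N-i+1,N]$. All its edges to $B$-vertices in $(u,N-i]$ are therefore non-grey, and nothing stops one of them from surviving step $2i$. Concretely, for $n=8$ we have $N=17$, $A=[1,10]$, $B=[8,17]$ and $J_1=[2,8]$. If $1$--$15$ is red and $2$--$15$ is blue, then the non-grey edge $9$--$15$ survives steps $1$ and $2$, although $9\notin J_1$. The symmetric claim for $I_i$ fails in the same way: $w=n$ has non-grey edges to every $A$-vertex in $[\lfloor n/2\rfloor,n-1]$. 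As written, your chain can reach a vertex at which nothing was removed in the relevant step, and it breaks.

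What actually places the chain vertices in the active intervals is the part of the path already built, anchored at the two ends $v_{n-1}\le a$ and $v_n\ge n$. The paper's backward induction carries the hypothesis that $v_kv_{k+1}\cdots v_n$ is already a monochromatic alternating path, with same-side vertices strictly monotone, whose edges all survive step $k-1$.

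\begin{itemize}
\item For $n$ even and $k=2i$: the $n/2-i$ distinct vertices $v_k>v_{k+2}>\cdots>v_{n-2}$ all lie to the right of $v_n\ge n$, so $v_k\ge 3n/2-i$. Claim~\ref{claim:algorithm} gives $v_k\le N-i+1$, hence $v_k\in I_i$.
\item For $k=2i+1$: the chain $v_k<v_{k+2}<\cdots<v_{n-1}\le a$ gives $v_k\le a-\lceil n/2\rceil+i+1$. Claim~\ref{claim:algorithm} gives $v_k\ge i+1$, hence $v_k\in J_i$.
\end{itemize}

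So monotonicity and interval membership must be proved together in one induction, not in the separate stages of your outline.

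Separately, the middle inequality $v_{n-1}<v_n$ does not come from $e_0$ being non-grey. It comes from orienting $e_0$ so that its smaller endpoint plays the $A$-role. This is always possible, since if $x\in A$, $y\in B$ and $x>y$, then both endpoints lie in $A\cap B$.
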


\begin{proof}
    We prove the claim when $n$ is even.
    The case when $n$ is odd is similar.

    Let $v_{n-1}v_{n}$ be an edge of $E(A,B) \setminus F$ which remains after step $n-2$ of the algorithm.
    Without loss of generality, assume that $v_{n-1}v_n$ is coloured blue, and $v_{n-1} \le v_n$, so $v_{n-1} \in A$ and $v_n \in B$. 
    By Claim~\ref{claim:algorithm}, after step $n-2$, all the edges (of any colour) between $[1,n/2-1]$ and $B$, and between $A$ and $[N-n/2+2,N]$ have been removed or are grey edges.
    Therefore, $v_{n-1} \ge n/2$ and $v_n \le N-n/2+1$.
    
    We will show that we can backtrack the algorithm and build a blue copy of $\AP_n$. We do this by showing with a backward induction that for $k=n-1,n-2,\ldots,1$, there exists a vertex $v_k$ such that $v_k v_{k+1} \dots v_{n-1} v_n$ is an alternating blue path and all its edges remain after step $k-1$ of the algorithm.
    When $k=n-1$, the statement is true by assumption.
    Now suppose it holds for some $k\geq2$, we need to prove that it also holds for $k-1$.
    Assume that $k=2i$ is even, the case when it is odd is similar. 
    By induction hypothesis, $v_{k} v_{k+1} \dots v_n$ is a blue alternating path and all of its edges remain after step $k-1=2i-1$ of the algorithm. In particular, as $v_kv_{k+1}$ remains after step $2i-2$, $v_{k+1}\in A$, $v_k\in B$, and we have $v_k\leq N-i+1$ by Claim~\ref{claim:algorithm}. Also, as there are $n/2-i$ vertices among $v_{k}, v_{k+1}, \ldots, v_n$ on the right of $v_n$, and $v_n\in B$, we must have $v_{k} \ge v_n + n/2-i\geq 3n/2-i$. 
    
    Therefore, $v_k \in I_i$. Since $v_{k+1}v_k$ is a blue edge which remains after step $k-1=2i-1$ and $v_k\in I_i$, the algorithm must have removed a blue edge of the form $w v_k$ for some vertex~$w$ on the left of $v_{k+1}$ in step $k-1$.
    This implies that the edge $wv_k$ remains after step $k-2$ of the algorithm, so letting $v_{k-1}=w$ extends the alternating path found so far, and completes the induction step.
\renewcommand{\qedsymbol}{$\boxdot$}
\end{proof}
\renewcommand{\qedsymbol}{$\square$}

Let $r$ be the number of edges in $E(A,B) \setminus F$ removed by the algorithm during the $n-2$ steps.
Then, 
\begin{align*}
r& \le 2 \cdot \sum_{i=1}^{\ceil{n/2}-1} |I_i| + 2 \cdot \sum_{i=1}^{\floor{n/2}-1} |J_i| \\
&=2(\ceil{n/2}-1)(N-\floor{3n/2}+2)+ 2(\floor{n/2}-1)({{a}}-\ceil{n/2}+1)\\
&\phantom{:}=\begin{cases}
  (n-2) (N + {{a}} - 2n+3)  = (n-2)(2N-3n+4)& \text{if $n$ is even,}\\
2(n-2)(N-n+1) - (n-3)(N -{{a}}) = (n-2)(2N-3n+5) & \text{if $n$ is odd.}
\end{cases}
\end{align*}

Observe that $ r+f < e(A,B) $ by our choices of $N$.
Thus an edge in $E(A,B) \setminus F$ remains after step $n-2$ of the algorithm, so by Claim~\ref{claim:no_edges}, we have $R_<(\AP_n) \le N$.
\end{proof}

\section{Proof of Theorem~\ref{th:turan}}\label{sec3}

The upper bound on $\ex_<(N,\AP_n)$ is proven in a fashion similar to the proof of Theorem~\ref{th:ramsey}. 
We run the same edge-deletion algorithm (see the short description at the beginning of Section~\ref{sec2}), but this time we take $A=B=[N]$.
Again we identify edges that cannot be in any $\AP_n$, so that it suffices to run the algorithm on the remaining edges.
The outcome of the described algorithm for one specific ordered graph as well as an extremal construction are illustrated in Figure~\ref{fig:thm3}.

\begin{proof}[Proof of Theorem~\ref{th:turan}]
First, we prove that $\ex_<(N,\AP_n)\leq \binom{n-1}2+(n-2)(N-n+1)$. Let $G$ be any ordered graph on $[N]$.
We show that if $e(G) \geq \binom{n-1}2+(n-2)(N-n+1)+1$, then $G$ contains a copy of $\AP_n$.

Similarly to the proof of Theorem~\ref{th:ramsey}, note that for any $i\in[\floor{n/2}]$ and $i+1\leq j\leq n-i$, the edge $ij$ cannot be a part of any copy of $\AP_n$ in $K_N^<$.
Also, for any $i \in [\ceil{n/2}-1]$ and $i+1\leq j\leq n-1-i$, the edge between $N+1-j$ and $N+1-i$ cannot be a part of any copy of $\AP_n$ in $K_N^<$.
We will refer to all the edges described in this paragraph as \emph{grey}.
As the edges mentioned above are pairwise distinct, the total number of grey edges is

$$\sum_{i=1}^{\lfloor n/2\rfloor}(n-2i)+\sum_{i=1}^{\lceil n/2\rceil-1}(n-2i-1)=\binom{n-1}{2}.$$

\input{thm3}

Partition the edges of $G$ as $E(G)=E\cup F$, where $F$ is the set of edges in $G$ that are grey and $E$ is the set of non-grey edges in $G$.
By the above, we have $|F|\leq \binom{n-1}{2}$.

We now describe an algorithm which removes edges of $E$ in $n-2$ steps in the following way.
In every odd step~$2i-1$ with $i\in[1,\ceil{n/2}-1]$, we remove the leftmost edge in $E$ adjacent to each vertex in $I_i=[n-i+1,N-i+1]$ (if they exist).
In every even step~$2i$ with $i\in[1,\floor{n/2}-1]$, we remove the rightmost edge in $E$ adjacent to each vertex in $J_i=[i+1,N-n+i+1]$ (if they exist).

Straightforward adaptations of the proofs of Claims \ref{claim:algorithm} and~\ref{claim:no_edges} show that if an edge in~$E$ remains after step $n-2$ of the algorithm, then $G$ contains an $\AP_n$.
In each step, the algorithm removes at most $N-n+1$ edges, and hence at most $(n-2)(N-n+1)$ of the edges in $E$ get removed during the $n-2$ steps of the algorithm. 
Note that 
\begin{align*}
    |E| = e(G) - |F|  &\ge 1+\binom{n-1}2+(n-2)(N-n+1) - \binom{n-1}2 \\
    &= 1+(n-2)(N-n+1).
\end{align*}
Therefore, at least one edge in $E$ remains after step $n-2$ of the algorithm, which implies that $G$ contains an $\AP_n$.

To finish the proof of the theorem, we give a construction of an ordered graph $G$ with vertex set $[N]$ with $\binom{n-1}2+(n-2)(N-n+1)$ edges that contains no $\AP_n$.
The edge set of $G$ consists exactly of all the edges adjacent to any vertex in $X=[1,\ceil{n/2}-1]$ or $Y=[N-\floor{n/2}+2,N]$.
It is straightforward to check that $G$ contains $\binom{n-1}2+(n-2)(N-n+1)$ edges.
If $G$ contains a copy of $\AP_n$, then as $|X|=\ceil{n/2}-1$ and $|Y|=\floor{n/2}-1$, the $(\ceil{n/2})$-th and $(\ceil{n/2}+1)$-th vertices in this copy of $\AP_n$ are both not in $X\cup Y$.
Hence, they are not connected by an edge in $G$, even though they must form an edge by definition of $\AP_n$, a contradiction.
\end{proof}

\noindent \textbf{Remark.} We remark once more that Theorem~\ref{th:turan} was independently obtained by Barát, Freschi, and Tóth~{\cite{barat2025matchings}}.
Interestingly, they provided a different lower bound construction by letting the edge set of an extremal graph consist of all edges $ij$ with $|i-j|\leq n-2$. 

\section{Discussion of other ordered paths}\label{sec4}

For $n$ even, recall the definitions of the ordered paths $P_n^{<,<}, P_n^{>,>}, P_n^{<,>}, P_n^{>,<}$ in Section~\ref{sec1.3}. 
We have already observed that $P^{>,<}_n$ is isomorphic to $P^{<,>}_n$ after reversing the order of the vertices, so they have the same ordered Ramsey and Tur\'an numbers.
Now, we consider the ordered Ramsey and Tur\'an numbers of $P_n^{>,>}$ and $P_n^{<,<}$.

First, we prove Proposition~\ref{prop:ramsey}, which gives an upper bound on the ordered Ramsey numbers of $P_n^{<,<}$ and $P_n^{>,>}$. 
The method is similar to those of Theorems \ref{th:ramsey} and \ref{th:turan}.

\begin{proof}[Proof of Proposition~\ref{prop:ramsey}]
Let $k = n/2$, and let $N=2M$ be an even number.
Let $A=[1,M]$ and $B=[M+1,N]$. 
Consider any red/blue edge-colouring of $K_N^<$.

Since $A$ and $B$ do not intersect, we have $e(A,B)=M^2$.
We run an algorithm that searches for a monochromatic copy of $P=(v_i)_{i=1}^n$ such that $(v_{2i-1})_{i=1}^k$ all lie in $A$ and $(v_{2i})_{i=1}^k$ all lie in $B$.
We define the \emph{grey edges} as edges between $A$ and $B$ that cannot be present in any copy of $P$ of the form described above. 

If $P$ is $P_n^{<,<}$, then the grey edges are
$$\{(i,N+1-j)\mid i \in [k-1], j\in[k-i]\}\cup\{(M+1-i,M+j)\mid i \in [k-2],j \in[k-i-1]\}.$$
If $P$ is $P_n^{>,>}$, then the grey edges are
$$\{(i,N+1-j)\mid i \in [k-2], j\in[k-i-1]\}\cup\{(M+1-i,M+j)\mid i \in [k-1],j \in[k-i]\}.$$
In both cases, the number of grey edges is $f=(k-1)^2$. See Figure \ref{fig:grey} for an illustration.

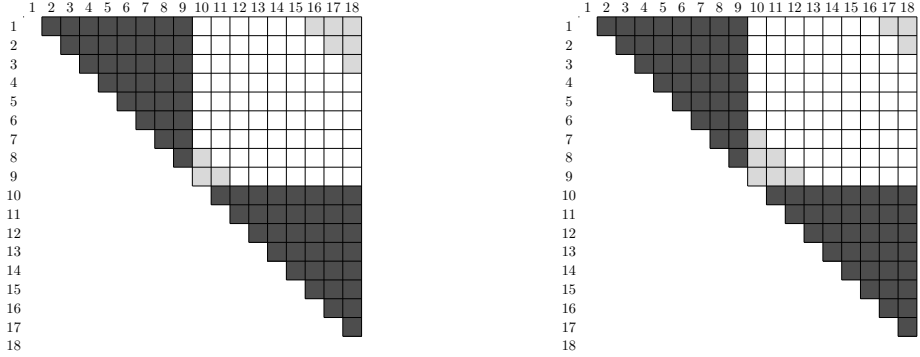
\begin{figure}[h]
\centering
\begin{subfigure}{0.48\textwidth}
\centering
\scalebox{0.5}{%
\begin{tikzpicture}

\fill[gray,opacity=0.3] (8.5,8.5) -- (8.5,9.0) -- (10.0,9.0) -- (10.0,8.5);;
\fill[gray,opacity=0.3] (9.0,8.5) -- (9.0,8.0) -- (10.0,8.0) -- (10.0,8.5);
\fill[gray,opacity=0.3] (9.5,7.5) -- (9.5,8.0) -- (10.0,8.0) -- (10.0,7.5);

\fill[gray,opacity=0.3] (5.5,5.5) -- (5.5,4.5) -- (6.0,4.5) -- (6.0,5.5);
\fill[gray,opacity=0.3] (6.0,4.5) -- (6.0,5.0) -- (6.5,5.0) -- (6.5,4.5);

\fill[black,opacity=0.7] (1.5,8.5) -- (1.5,9.0) -- (5.5,9.0) -- (5.5,8.5);
\fill[black,opacity=0.7] (2.0,8.5) -- (2.0,8.0) -- (5.5,8.0) -- (5.5,8.5);
\fill[black,opacity=0.7] (2.5,7.5) -- (2.5,8.0) -- (5.5,8.0) -- (5.5,7.5);
\fill[black,opacity=0.7] (3.0,7.5) -- (3.0,7.0) -- (5.5,7.0) -- (5.5,7.5);
\fill[black,opacity=0.7] (3.5,6.5) -- (3.5,7.0) -- (5.5,7.0) -- (5.5,6.5);
\fill[black,opacity=0.7] (4.0,6.5) -- (4.0,6.0) -- (5.5,6.0) -- (5.5,6.5);
\fill[black,opacity=0.7] (4.5,5.5) -- (4.5,6.0) -- (5.5,6.0) -- (5.5,5.5);
\fill[black,opacity=0.7] (5.0,5.5) -- (5.0,5.0) -- (5.5,5.0) -- (5.5,5.5);

\fill[black,opacity=0.7] (10.0,4.5) -- (10.0,0.5) -- (9.5,0.5) -- (9.5,4.5);
\fill[black,opacity=0.7] (9.0,4.5) -- (9.0,1.0) -- (9.5,1.0) -- (9.5,4.5);
\fill[black,opacity=0.7] (9.0,4.5) -- (9.0,1.5) -- (8.5,1.5) -- (8.5,4.5);
\fill[black,opacity=0.7] (8.0,4.5) -- (8.0,2.0) -- (8.5,2.0) -- (8.5,4.5);
\fill[black,opacity=0.7] (8.0,4.5) -- (8.0,2.5) -- (7.5,2.5) -- (7.5,4.5);
\fill[black,opacity=0.7] (7.0,4.5) -- (7.0,3.0) -- (7.5,3.0) -- (7.5,4.5);
\fill[black,opacity=0.7] (7.0,4.5) -- (7.0,3.5) -- (6.5,3.5) -- (6.5,4.5);
\fill[black,opacity=0.7] (6.0,4.5) -- (6.0,4.0) -- (6.5,4.0) -- (6.5,4.5);

\node[align=center] at (1.25,9.25) {$1$};
\node[align=center] at (1.75,9.25) {$2$};
\node[align=center] at (2.25,9.25) {$3$};
\node[align=center] at (2.75,9.25) {$4$};
\node[align=center] at (3.25,9.25) {$5$};
\node[align=center] at (3.75,9.25) {$6$};
\node[align=center] at (4.25,9.25) {$7$};
\node[align=center] at (4.75,9.25) {$8$};
\node[align=center] at (5.25,9.25) {$9$};
\node[align=center] at (5.75,9.25) {$10$};
\node[align=center] at (6.25,9.25) {$11$};
\node[align=center] at (6.75,9.25) {$12$};
\node[align=center] at (7.25,9.25) {$13$};
\node[align=center] at (7.75,9.25) {$14$};
\node[align=center] at (8.25,9.25) {$15$};
\node[align=center] at (8.75,9.25) {$16$};
\node[align=center] at (9.25,9.25) {$17$};
\node[align=center] at (9.75,9.25) {$18$};

\node[align=center] at (0.75,8.75) {$1$};
\node[align=center] at (0.75,8.25) {$2$};
\node[align=center] at (0.75,7.75) {$3$};
\node[align=center] at (0.75,7.25) {$4$};
\node[align=center] at (0.75,6.75) {$5$};
\node[align=center] at (0.75,6.25) {$6$};
\node[align=center] at (0.75,5.75) {$7$};
\node[align=center] at (0.75,5.25) {$8$};
\node[align=center] at (0.75,4.75) {$9$};
\node[align=center] at (0.75,4.25) {$10$};
\node[align=center] at (0.75,3.75) {$11$};
\node[align=center] at (0.75,3.25) {$12$};
\node[align=center] at (0.75,2.75) {$13$};
\node[align=center] at (0.75,2.25) {$14$};
\node[align=center] at (0.75,1.75) {$15$};
\node[align=center] at (0.75,1.25) {$16$};
\node[align=center] at (0.75,0.75) {$17$};
\node[align=center] at (0.75,0.25) {$18$};

\draw (9.5,0.5) -- (10.0,0.5);
\draw (9.0,1.0) -- (10.0,1.0);
\draw (8.5,1.5) -- (10.0,1.5);
\draw (8.0,2.0) -- (10.0,2.0);
\draw (7.5,2.5) -- (10.0,2.5);
\draw (7.0,3.0) -- (10.0,3.0);
\draw (6.5,3.5) -- (10.0,3.5);
\draw (6.0,4.0) -- (10.0,4.0);
\draw (5.5,4.5) -- (10.0,4.5);
\draw (5.0,5.0) -- (10.0,5.0);
\draw (4.5,5.5) -- (10.0,5.5);
\draw (4.0,6.0) -- (10.0,6.0);
\draw (3.5,6.5) -- (10.0,6.5);
\draw (3.0,7.0) -- (10.0,7.0);
\draw (2.5,7.5) -- (10.0,7.5);
\draw (2.0,8.0) -- (10.0,8.0);
\draw (1.5,8.5) -- (10.0,8.5);
\draw (1.5,9.0) -- (10.0,9.0);

\draw (10.0,0.5) -- (10.0,9.0);
\draw (9.5,0.5) -- (9.5,9.0);
\draw (9.0,1.0) -- (9.0,9.0);
\draw (8.5,1.5) -- (8.5,9.0);
\draw (8.0,2.0) -- (8.0,9.0);
\draw (7.5,2.5) -- (7.5,9.0);
\draw (7.0,3.0) -- (7.0,9.0);
\draw (6.5,3.5) -- (6.5,9.0);
\draw (6.0,4.0) -- (6.0,9.0);
\draw (5.5,4.5) -- (5.5,9.0);
\draw (5.0,5.0) -- (5.0,9.0);
\draw (4.5,5.5) -- (4.5,9.0);
\draw (4.0,6.0) -- (4.0,9.0);
\draw (3.5,6.5) -- (3.5,9.0);
\draw (3.0,7.0) -- (3.0,9.0);
\draw (2.5,7.5) -- (2.5,9.0);
\draw (2.0,8.0) -- (2.0,9.0);
\draw (1.5,8.5) -- (1.5,9.0);
\draw (1.0,9.0) -- (1.0,9.0);
\end{tikzpicture}
}
\end{subfigure}
\begin{subfigure}{0.48\textwidth}
\centering
\scalebox{0.5}{%
\begin{tikzpicture}

\fill[gray,opacity=0.3] (9.0,8.5) -- (9.0,9.0) -- (10.0,9.0) -- (10.0,8.5);
\fill[gray,opacity=0.3] (9.5,8.5) -- (9.5,8.0) -- (10.0,8.0) -- (10.0,8.5);

\fill[gray,opacity=0.3] (5.5,6.0) -- (5.5,4.5) -- (6.0,4.5) -- (6.0,6.0);
\fill[gray,opacity=0.3] (6.0,4.5) -- (6.0,5.5) -- (6.5,5.5) -- (6.5,4.5);
\fill[gray,opacity=0.3] (7.0,4.5) -- (7.0,5.0) -- (6.5,5.0) -- (6.5,4.5);

\fill[black,opacity=0.7] (1.5,8.5) -- (1.5,9.0) -- (5.5,9.0) -- (5.5,8.5);
\fill[black,opacity=0.7] (2.0,8.5) -- (2.0,8.0) -- (5.5,8.0) -- (5.5,8.5);
\fill[black,opacity=0.7] (2.5,7.5) -- (2.5,8.0) -- (5.5,8.0) -- (5.5,7.5);
\fill[black,opacity=0.7] (3.0,7.5) -- (3.0,7.0) -- (5.5,7.0) -- (5.5,7.5);
\fill[black,opacity=0.7] (3.5,6.5) -- (3.5,7.0) -- (5.5,7.0) -- (5.5,6.5);
\fill[black,opacity=0.7] (4.0,6.5) -- (4.0,6.0) -- (5.5,6.0) -- (5.5,6.5);
\fill[black,opacity=0.7] (4.5,5.5) -- (4.5,6.0) -- (5.5,6.0) -- (5.5,5.5);
\fill[black,opacity=0.7] (5.0,5.5) -- (5.0,5.0) -- (5.5,5.0) -- (5.5,5.5);

\fill[black,opacity=0.7] (10.0,4.5) -- (10.0,0.5) -- (9.5,0.5) -- (9.5,4.5);
\fill[black,opacity=0.7] (9.0,4.5) -- (9.0,1.0) -- (9.5,1.0) -- (9.5,4.5);
\fill[black,opacity=0.7] (9.0,4.5) -- (9.0,1.5) -- (8.5,1.5) -- (8.5,4.5);
\fill[black,opacity=0.7] (8.0,4.5) -- (8.0,2.0) -- (8.5,2.0) -- (8.5,4.5);
\fill[black,opacity=0.7] (8.0,4.5) -- (8.0,2.5) -- (7.5,2.5) -- (7.5,4.5);
\fill[black,opacity=0.7] (7.0,4.5) -- (7.0,3.0) -- (7.5,3.0) -- (7.5,4.5);
\fill[black,opacity=0.7] (7.0,4.5) -- (7.0,3.5) -- (6.5,3.5) -- (6.5,4.5);
\fill[black,opacity=0.7] (6.0,4.5) -- (6.0,4.0) -- (6.5,4.0) -- (6.5,4.5);

\node[align=center] at (1.25,9.25) {$1$};
\node[align=center] at (1.75,9.25) {$2$};
\node[align=center] at (2.25,9.25) {$3$};
\node[align=center] at (2.75,9.25) {$4$};
\node[align=center] at (3.25,9.25) {$5$};
\node[align=center] at (3.75,9.25) {$6$};
\node[align=center] at (4.25,9.25) {$7$};
\node[align=center] at (4.75,9.25) {$8$};
\node[align=center] at (5.25,9.25) {$9$};
\node[align=center] at (5.75,9.25) {$10$};
\node[align=center] at (6.25,9.25) {$11$};
\node[align=center] at (6.75,9.25) {$12$};
\node[align=center] at (7.25,9.25) {$13$};
\node[align=center] at (7.75,9.25) {$14$};
\node[align=center] at (8.25,9.25) {$15$};
\node[align=center] at (8.75,9.25) {$16$};
\node[align=center] at (9.25,9.25) {$17$};
\node[align=center] at (9.75,9.25) {$18$};

\node[align=center] at (0.75,8.75) {$1$};
\node[align=center] at (0.75,8.25) {$2$};
\node[align=center] at (0.75,7.75) {$3$};
\node[align=center] at (0.75,7.25) {$4$};
\node[align=center] at (0.75,6.75) {$5$};
\node[align=center] at (0.75,6.25) {$6$};
\node[align=center] at (0.75,5.75) {$7$};
\node[align=center] at (0.75,5.25) {$8$};
\node[align=center] at (0.75,4.75) {$9$};
\node[align=center] at (0.75,4.25) {$10$};
\node[align=center] at (0.75,3.75) {$11$};
\node[align=center] at (0.75,3.25) {$12$};
\node[align=center] at (0.75,2.75) {$13$};
\node[align=center] at (0.75,2.25) {$14$};
\node[align=center] at (0.75,1.75) {$15$};
\node[align=center] at (0.75,1.25) {$16$};
\node[align=center] at (0.75,0.75) {$17$};
\node[align=center] at (0.75,0.25) {$18$};

\draw (9.5,0.5) -- (10.0,0.5);
\draw (9.0,1.0) -- (10.0,1.0);
\draw (8.5,1.5) -- (10.0,1.5);
\draw (8.0,2.0) -- (10.0,2.0);
\draw (7.5,2.5) -- (10.0,2.5);
\draw (7.0,3.0) -- (10.0,3.0);
\draw (6.5,3.5) -- (10.0,3.5);
\draw (6.0,4.0) -- (10.0,4.0);
\draw (5.5,4.5) -- (10.0,4.5);
\draw (5.0,5.0) -- (10.0,5.0);
\draw (4.5,5.5) -- (10.0,5.5);
\draw (4.0,6.0) -- (10.0,6.0);
\draw (3.5,6.5) -- (10.0,6.5);
\draw (3.0,7.0) -- (10.0,7.0);
\draw (2.5,7.5) -- (10.0,7.5);
\draw (2.0,8.0) -- (10.0,8.0);
\draw (1.5,8.5) -- (10.0,8.5);
\draw (1.5,9.0) -- (10.0,9.0);

\draw (10.0,0.5) -- (10.0,9.0);
\draw (9.5,0.5) -- (9.5,9.0);
\draw (9.0,1.0) -- (9.0,9.0);
\draw (8.5,1.5) -- (8.5,9.0);
\draw (8.0,2.0) -- (8.0,9.0);
\draw (7.5,2.5) -- (7.5,9.0);
\draw (7.0,3.0) -- (7.0,9.0);
\draw (6.5,3.5) -- (6.5,9.0);
\draw (6.0,4.0) -- (6.0,9.0);
\draw (5.5,4.5) -- (5.5,9.0);
\draw (5.0,5.0) -- (5.0,9.0);
\draw (4.5,5.5) -- (4.5,9.0);
\draw (4.0,6.0) -- (4.0,9.0);
\draw (3.5,6.5) -- (3.5,9.0);
\draw (3.0,7.0) -- (3.0,9.0);
\draw (2.5,7.5) -- (2.5,9.0);
\draw (2.0,8.0) -- (2.0,9.0);
\draw (1.5,8.5) -- (1.5,9.0);
\draw (1.0,9.0) -- (1.0,9.0);
\end{tikzpicture}
}
\end{subfigure}
\caption{An illustration of the grey edges from the proof of Proposition \ref{prop:ramsey} for $N=18$ and $n=8$. On the left is the case of $P_n^{<,<}$, on the right the case of $P_n^{>,>}$. The dark squares correspond to edges that are not between $A$ and $B$, while the grey ones correspond to the grey edges in the proof.}\label{fig:grey}
\end{figure}

If $P$ is $P_n^{<,<}$, we define $I_i=[M+i,N-k+i]$ and $J_i=[i+1,M-k+i+1]$ for every $i \in [k-1]$.
The algorithm then runs for $n-2$ steps.
For every $i\in[k-1]$, in odd step~$2i-1$, the leftmost red and blue edges adjacent to each vertex in $I_i$ are removed, and in even step $2i$, the leftmost red and blue edges adjacent to each vertex in $J_i$ are removed.

On the other hand, if $P$ is $P_n^{>,>}$, for every $i \in [k-1]$, we define $I_i=[M+k-i+1,N-i+1]$ and $J_i=[k-i,M-i]$.
For every odd step $2i-1$, the rightmost red and blue edges adjacent to each vertex in $I_i$ are removed.
For every even step $2i$, the rightmost red and blue edges adjacent to each vertex in $J_i$ are removed.

In either case, one can show, similarly to the proofs of Theorems~\ref{th:ramsey} and~\ref{th:turan}, that if any non-grey edge between $A$ and $B$ remains after $n-2$ steps, then there is a monochromatic copy of $P$ in the red/blue edge-colouring of $K_N^<$.
Note that the number of removed edges in both cases is at most $2(n-2)(M-k+1)$.
Hence, a monochromatic copy of $P$ is present if $e(A,B) > 2(n-2)(M-k+1)+f$. This inequality is equivalent to $M^2>2(2k-2)(M-k+1)+(k-1)^2$, and holds if and only if $(M-3(k-1))(M-(k-1))>0$. 

In particular, if $M\geq3k-2$, then there is a monochromatic copy of $P$ in the red/blue edge-coloured $K_N^<$.
Therefore, $R_<(P)\leq N=2M=6k-4=3n-4$.
\end{proof}

We remark that a naive attempt to attain a better upper bound for $R_<(P)$ when $P\in\{P^{<,<}_n,P^{>,>}_n\}$ by allowing $A$ and $B$ to intersect as in the proof of Theorem~\ref{th:ramsey} runs into the following issue:
After the corresponding edge-deletion algorithm, the analogue of
Claim~\ref{claim:no_edges} does not need to hold, and we outline why below.
When $P=\AP_n$ and an edge~$v_{n-1}v_n$ with $v_{n-1} \in A$ and $v_n \in B$ remains after the deletion algorithm, we backtrack the algorithm and get a sequence of vertices $v_1, v_2, \dots, v_{n}$ such that $v_1, v_3, \ldots, v_{n-1} \in A$, $v_2, v_4, \ldots, v_n \in B$, and $v_iv_{i+1}$ is always an edge.
Moreover it is always the case that $v_1 < v_3 < \cdots < v_{n-1} < v_n < v_{n-2} < \cdots < v_2$, so $v_1v_2 \dots v_n$ is a copy of $\AP_n$.
Suppose we do the analogue for $P= P^{<,<}_n$, starting again with an edge $v_{n-1}v_n$ with $v_{n-1} \in A$ and $v_n \in B$.
We can still backtrack and get a sequence of vertices $v_1, v_2, \dots, v_{n}$ such that $v_1,v_3,\ldots, v_{n-1} \in A$, $v_2,v_4,\ldots,v_n \in B$, and $v_i v_{i+1}$ is always an edge.
However, while we have $v_1 < v_3 <\cdots< v_{n-1}$ and $v_2 < v_4 <\cdots <v_n$, it is not always the case that
\begin{equation}\label{eq:maxmin}
v_{n-1} < v_2,
\end{equation}
which is required for $v_1v_2 \dots v_n$ to be a valid copy of $P_n^{<,<}$. In fact, it may even happen that the same vertex appears twice in the sequence.
Therefore, we do not necessarily get a copy of $P$.
From the point of view of the matrix representation, this is due to the fact that the staircase corresponding to a copy of $P^{<,<}_n$ has a different ``orientation'' than that corresponding to a copy of $\AP_n$.
Note that if $A$ and $B$ are disjoint as in the proof of Proposition~\ref{prop:ramsey}, then condition~\eqref{eq:maxmin} is automatically satisfied and thus, in this setup, the analogue of Claim~\ref{claim:no_edges} does hold.

It turns out that the ordered Turán numbers of $P_n^{<,<}$ and $P_n^{>,>}$ can differ both with each other and with that of $P_n^{<,>}$ and $P_n^{>,<}$.
For example, we have the following values and Figure~\ref{fig:turanother} shows the respective extremal constructions.

\[\arraycolsep=12pt\def\arraystretch{1.25}
\begin{array}{ccc} 
\mathrm{ex}_<(6,P_4^{<,>})=\mathrm{ex}_<(6,P_4^{>,<})=9\phantom{0} & \mathrm{ex}_<(6,P_4^{<,<})=9\phantom{0} & \mathrm{ex}_<(6,P_4^{>,>})=11\\
\mathrm{ex}_<(8,P_6^{<,>})=\mathrm{ex}_<(8,P_6^{>,<})=22 & \mathrm{ex}_<(8,P_6^{<,<})=24 & \mathrm{ex}_<(8,P_6^{>,>})=24
\end{array}\]

\begin{figure}[h]
\centering
\begin{subfigure}{0.48\textwidth}
\centering
\scalebox{1.0}{%
\begin{tikzpicture}

\fill[green,opacity=0.7] (1.5,3.0) -- (1.5,3.5) -- (4.0,3.5) -- (4.0,3.0);
\fill[green,opacity=0.7] (3.5,1.0) -- (3.5,3.0) -- (4.0,3.0) -- (4.0,1.0);

\fill[white] (3.5,0.5) -- (3.5,0.0) -- (3.0,0.0) -- (3.0,0.5);

\node[align=center] at (1.25,3.75) {$1$};
\node[align=center] at (1.75,3.75) {$2$};
\node[align=center] at (2.25,3.75) {$3$};
\node[align=center] at (2.75,3.75) {$4$};
\node[align=center] at (3.25,3.75) {$5$};
\node[align=center] at (3.75,3.75) {$6$};

\node[align=center] at (0.75,3.25) {$1$};
\node[align=center] at (0.75,2.75) {$2$};
\node[align=center] at (0.75,2.25) {$3$};
\node[align=center] at (0.75,1.75) {$4$};
\node[align=center] at (0.75,1.25) {$5$};
\node[align=center] at (0.75,0.75) {$6$};


\draw (3.5,1.0) -- (4.0,1.0);
\draw (3.0,1.5) -- (4.0,1.5);
\draw (2.5,2.0) -- (4.0,2.0);
\draw (2.0,2.5) -- (4.0,2.5);
\draw (1.5,3.0) -- (4.0,3.0);
\draw (1.5,3.5) -- (4.0,3.5);


\draw (4.0,1.0) -- (4.0,3.5);
\draw (3.5,1.0) -- (3.5,3.5);
\draw (3.0,1.5) -- (3.0,3.5);
\draw (2.5,2.0) -- (2.5,3.5);
\draw (2.0,2.5) -- (2.0,3.5);
\draw (1.5,3.0) -- (1.5,3.5);
\end{tikzpicture}
}
\end{subfigure}
\begin{subfigure}{0.48\textwidth}
\centering

\scalebox{1.0}{%
\begin{tikzpicture}

\fill[green,opacity=0.7] (1.5,3.0) -- (1.5,3.5) -- (3.5,3.5) -- (3.5,3.0);
\fill[green,opacity=0.7] (3.5,1.0) -- (3.5,3.0) -- (4.0,3.0) -- (4.0,1.0);
\fill[green,opacity=0.7] (2.5,2.5) -- (2.5,3.0) -- (2.0,3.0) -- (2.0,2.5);
\fill[green,opacity=0.7] (2.5,2.5) -- (2.5,2.0) -- (3.0,2.0) -- (3.0,2.5);
\fill[green,opacity=0.7] (3.5,1.5) -- (3.5,2.0) -- (3.0,2.0) -- (3.0,1.5);

\fill[white] (3.5,0.5) -- (3.5,0.0) -- (3.0,0.0) -- (3.0,0.5);

\node[align=center] at (1.25,3.75) {$1$};
\node[align=center] at (1.75,3.75) {$2$};
\node[align=center] at (2.25,3.75) {$3$};
\node[align=center] at (2.75,3.75) {$4$};
\node[align=center] at (3.25,3.75) {$5$};
\node[align=center] at (3.75,3.75) {$6$};

\node[align=center] at (0.75,3.25) {$1$};
\node[align=center] at (0.75,2.75) {$2$};
\node[align=center] at (0.75,2.25) {$3$};
\node[align=center] at (0.75,1.75) {$4$};
\node[align=center] at (0.75,1.25) {$5$};
\node[align=center] at (0.75,0.75) {$6$};


\draw (3.5,1.0) -- (4.0,1.0);
\draw (3.0,1.5) -- (4.0,1.5);
\draw (2.5,2.0) -- (4.0,2.0);
\draw (2.0,2.5) -- (4.0,2.5);
\draw (1.5,3.0) -- (4.0,3.0);
\draw (1.5,3.5) -- (4.0,3.5);


\draw (4.0,1.0) -- (4.0,3.5);
\draw (3.5,1.0) -- (3.5,3.5);
\draw (3.0,1.5) -- (3.0,3.5);
\draw (2.5,2.0) -- (2.5,3.5);
\draw (2.0,2.5) -- (2.0,3.5);
\draw (1.5,3.0) -- (1.5,3.5);
\end{tikzpicture}
}
\end{subfigure}

\begin{subfigure}{0.48\textwidth}
\centering
\scalebox{1.0}{%
\begin{tikzpicture}

\fill[green,opacity=0.7] (1.5,3.0) -- (1.5,3.5) -- (5.0,3.5) -- (5.0,3.0);
\fill[green,opacity=0.7] (2.0,3.0) -- (2.0,2.5) -- (4.0,2.5) -- (4.0,3.0);
\fill[green,opacity=0.7] (5.0,3.0) -- (5.0,2.5) -- (4.5,2.5) -- (4.5,3.0);
\fill[green,opacity=0.7] (2.5,2.0) -- (3.0,2.0) -- (3.0,2.5) -- (2.5,2.5);
\fill[green,opacity=0.7] (5.0,2.0) -- (5.0,2.5) -- (4.0,2.5) -- (4.0,2.0);
\fill[green,opacity=0.7] (3.5,2.0) -- (3.0,2.0) -- (3.0,1.5) -- (3.5,1.5);
\fill[green,opacity=0.7] (5.0,2.0) -- (5.0,1.5) -- (4.0,1.5) -- (4.0,2.0);
\fill[green,opacity=0.7] (5.0,1.0) -- (5.0,1.5) -- (3.5,1.5) -- (3.5,1.0);
\fill[green,opacity=0.7] (5.0,1.0) -- (5.0,0.5) -- (4.0,0.5) -- (4.0,1.0);
\fill[green,opacity=0.7] (5.0,0.0) -- (5.0,0.5) -- (4.5,0.5) -- (4.5,0.0);

\node[align=center] at (1.25,3.75) {$1$};
\node[align=center] at (1.75,3.75) {$2$};
\node[align=center] at (2.25,3.75) {$3$};
\node[align=center] at (2.75,3.75) {$4$};
\node[align=center] at (3.25,3.75) {$5$};
\node[align=center] at (3.75,3.75) {$6$};
\node[align=center] at (4.25,3.75) {$7$};
\node[align=center] at (4.75,3.75) {$8$};

\node[align=center] at (0.75,3.25) {$1$};
\node[align=center] at (0.75,2.75) {$2$};
\node[align=center] at (0.75,2.25) {$3$};
\node[align=center] at (0.75,1.75) {$4$};
\node[align=center] at (0.75,1.25) {$5$};
\node[align=center] at (0.75,0.75) {$6$};
\node[align=center] at (0.75,0.25) {$7$};
\node[align=center] at (0.75,-0.25) {$8$};


\draw (4.5,0.0) -- (5.0,0.0);
\draw (4.0,0.5) -- (5.0,0.5);
\draw (3.5,1.0) -- (5.0,1.0);
\draw (3.0,1.5) -- (5.0,1.5);
\draw (2.5,2.0) -- (5.0,2.0);
\draw (2.0,2.5) -- (5.0,2.5);
\draw (1.5,3.0) -- (5.0,3.0);
\draw (1.5,3.5) -- (5.0,3.5);


\draw (5.0,0.0) -- (5.0,3.5);
\draw (4.5,0.0) -- (4.5,3.5);
\draw (4.0,0.5) -- (4.0,3.5);
\draw (3.5,1.0) -- (3.5,3.5);
\draw (3.0,1.5) -- (3.0,3.5);
\draw (2.5,2.0) -- (2.5,3.5);
\draw (2.0,2.5) -- (2.0,3.5);
\draw (1.5,3.0) -- (1.5,3.5);
\end{tikzpicture}
}
\end{subfigure}
\begin{subfigure}{0.48\textwidth}
\centering
\scalebox{1.0}{%
\begin{tikzpicture}

\fill[green,opacity=0.7] (1.5,3.0) -- (1.5,3.5) -- (4.0,3.5) -- (4.0,3.0);
\fill[green,opacity=0.7] (5.0,3.0) -- (5.0,3.5) -- (4.5,3.5) -- (4.5,3.0);
\fill[green,opacity=0.7] (2.0,3.0) -- (2.0,2.5) -- (4.0,2.5) -- (4.0,3.0);
\fill[green,opacity=0.7] (2.5,2.0) -- (3.5,2.0) -- (3.5,2.5) -- (2.5,2.5);
\fill[green,opacity=0.7] (5.0,2.0) -- (5.0,2.5) -- (4.0,2.5) -- (4.0,2.0);
\fill[green,opacity=0.7] (5.0,2.0) -- (3.0,2.0) -- (3.0,1.5) -- (5.0,1.5);
\fill[green,opacity=0.7] (5.0,1.0) -- (5.0,1.5) -- (3.5,1.5) -- (3.5,1.0);
\fill[green,opacity=0.7] (5.0,1.0) -- (5.0,0.5) -- (4.0,0.5) -- (4.0,1.0);
\fill[green,opacity=0.7] (5.0,0.0) -- (5.0,0.5) -- (4.5,0.5) -- (4.5,0.0);

\node[align=center] at (1.25,3.75) {$1$};
\node[align=center] at (1.75,3.75) {$2$};
\node[align=center] at (2.25,3.75) {$3$};
\node[align=center] at (2.75,3.75) {$4$};
\node[align=center] at (3.25,3.75) {$5$};
\node[align=center] at (3.75,3.75) {$6$};
\node[align=center] at (4.25,3.75) {$7$};
\node[align=center] at (4.75,3.75) {$8$};

\node[align=center] at (0.75,3.25) {$1$};
\node[align=center] at (0.75,2.75) {$2$};
\node[align=center] at (0.75,2.25) {$3$};
\node[align=center] at (0.75,1.75) {$4$};
\node[align=center] at (0.75,1.25) {$5$};
\node[align=center] at (0.75,0.75) {$6$};
\node[align=center] at (0.75,0.25) {$7$};
\node[align=center] at (0.75,-0.25) {$8$};


\draw (4.5,0.0) -- (5.0,0.0);
\draw (4.0,0.5) -- (5.0,0.5);
\draw (3.5,1.0) -- (5.0,1.0);
\draw (3.0,1.5) -- (5.0,1.5);
\draw (2.5,2.0) -- (5.0,2.0);
\draw (2.0,2.5) -- (5.0,2.5);
\draw (1.5,3.0) -- (5.0,3.0);
\draw (1.5,3.5) -- (5.0,3.5);


\draw (5.0,0.0) -- (5.0,3.5);
\draw (4.5,0.0) -- (4.5,3.5);
\draw (4.0,0.5) -- (4.0,3.5);
\draw (3.5,1.0) -- (3.5,3.5);
\draw (3.0,1.5) -- (3.0,3.5);
\draw (2.5,2.0) -- (2.5,3.5);
\draw (2.0,2.5) -- (2.0,3.5);
\draw (1.5,3.0) -- (1.5,3.5);
\end{tikzpicture}
}
\end{subfigure}
\caption{Extremal constructions showing $\mathrm{ex}_<(6,P_4^{<,<})=9$ (top left), $\mathrm{ex}_<(6,P_4^{>,>})\\=11$ (top right), $\mathrm{ex}_<(8,P_6^{<,<})=24$ (bottom left), and $\mathrm{ex}_<(8,P_6^{>,>})=24$ (bottom right).}\label{fig:turanother}
\end{figure}
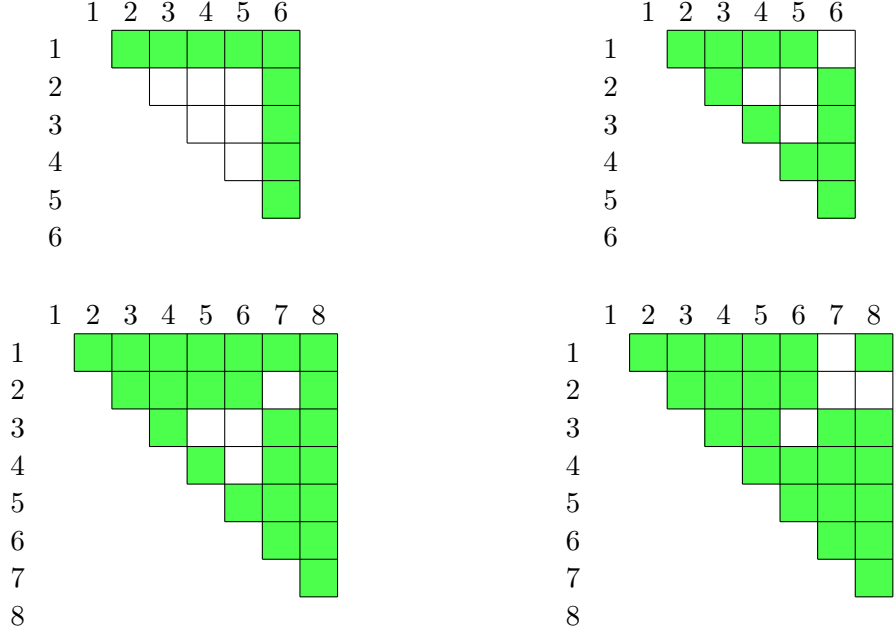

For $P \in \{P^{<,<}_n,P^{>,>}_n\}$, our proof of the upper bound on $\ex_<(N,P)$ in Theorem~\ref{thm:turanother} uses the \emph{ordered bipartite Tur\'an number} $\ex_<(M,M,P)$ of $P$, which is the maximum number of edges an ordered bipartite graph with bipartition classes $[1,M]$ and $[M+1,2M]$ can have if it avoids $P$.
In contrast to the behaviours of the ordered Tur\'an numbers above, the four ordered paths considered in this section share the same ordered bipartite Tur\'an numbers, which are determined exactly by the following result.

\begin{theorem}\label{thm:bipturan}
Let $N$ and $n$ be even with $N\geq n$. Let $P \in \{P_n^{<,<}, P_n^{>,>}, P_n^{<,>},$ $P_n^{>,<}\}$. Then, $\mathrm{ex}_<(N/2,N/2,P)= (n/2-1)(N-n/2+1)$.
\end{theorem}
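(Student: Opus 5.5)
The statement has two halves: a matching extremal construction for the lower bound, and an edge-deletion argument for the upper bound. Throughout, write $M=N/2$, $k=n/2$, $A=[1,M]$ and $B=[M+1,N]$. The target value factors as
\[
(k-1)(2M-k+1)=(k-1)^2+2(k-1)(M-k+1).
\]
First I reduce copies of $P$ in an ordered bipartite graph between $A$ and $B$ to a normal form. Every edge goes from $A$ to $B$, and $A$ lies entirely to the left of $B$. Each $P$ alternates between $\{1,\dots,k\}$ and $\{k+1,\dots,2k\}$. Hence in any copy, the images of $1,\dots,k$ lie in $A$ and those of $k+1,\dots,2k$ lie in $B$. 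So a copy is given by $a_1<\dots<a_k$ in $A$ and $b_1<\dots<b_k$ in $B$, joined in the order prescribed by $r_1,r_2$. These are exactly the copies considered in the proof of Proposition~\ref{prop:ramsey}.

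\textbf{Lower bound.} Let $X\subseteq A$ be an interval of $k-1$ vertices of $A$, and $Y\subseteq B$ an interval of $k-1$ vertices of $B$. Take all $A$--$B$ edges meeting $X\cup Y$. Their number is $(k-1)M+(k-1)M-(k-1)^2=(k-1)(2M-k+1)$, as required. The endpoints of $X$ and $Y$ are chosen so that some edge of $P$ joins an extreme $a$ to an extreme $b$.
\begin{itemize}
\item For $P_n^{<,<}$ (path $a_1b_1a_2b_2\dots a_kb_k$), the edge $a_kb_k$ exists. Take $X$ to be the first $k-1$ vertices of $A$ and $Y$ the first $k-1$ vertices of $B$. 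Since $a_k$ is the largest of $k$ vertices of $A$, it is not in $X$; likewise $b_k\notin Y$.
\item For $\AP_n=P_n^{<,>}$, the edge $a_kb_1$ exists. Take $X$ to be the first $k-1$ vertices of $A$ and $Y$ the last $k-1$ vertices of $B$.
\item For $P_n^{>,>}$, the edge $a_1b_1$ exists. Take $X$ and $Y$ to be the last $k-1$ vertices of $A$ and of $B$ respectively.
\item For $P_n^{>,<}$, the edge $a_1b_k$ exists. Take $X$ to be the last $k-1$ vertices of $A$ and $Y$ the first $k-1$ vertices of $B$.
\end{itemize}
In each case the distinguished edge has neither endpoint in $X\cup Y$, so it is absent, and the graph avoids $P$.

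\textbf{Upper bound.} I mimic the proof of Proposition~\ref{prop:ramsey}, now with a single colour as in Theorem~\ref{th:turan}.
\begin{itemize}
\item \emph{Grey edges.} These are edges between $A$ and $B$ that lie in no copy of the above normal form. For $P_n^{<,<}$ and $P_n^{>,>}$ they are exactly the sets listed in the proof of Proposition~\ref{prop:ramsey}, so there are $(k-1)^2$ of them. For $\AP_n$, they are the edges from $i\in[k-1]$ to the first $k-i$ vertices of $B$, together with the edges from the $i$-th last vertex of $B$ ($i\in[k-1]$) to the last $k-i$ vertices of $A$. This again gives $\binom{k}{2}+\binom{k-1}{2}=(k-1)^2$. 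The case $P_n^{>,<}$ follows by reversing the order, which swaps the roles of $A$ and $B$.
\item \emph{Deletion algorithm.} Run the $n-2$ step algorithm with the intervals $I_i,J_i$, each of size $M-k+1$, as in Proposition~\ref{prop:ramsey}. For $\AP_n$, use the analogous intervals $I_i=[M+k-i+1,N-i+1]$ and $J_i=[i+1,M-k+i+1]$. At each step, delete only one extreme edge (leftmost or rightmost, as appropriate) per vertex. In total at most $2(k-1)(M-k+1)$ edges are removed.
\end{itemize}
So if $e(G)>(k-1)^2+2(k-1)(M-k+1)$, some non-grey edge survives. The backtracking of Claim~\ref{claim:no_edges}, together with the inductive description of Claim~\ref{claim:algorithm}, then produces vertices $v_1,\dots,v_n$ alternating between $A$ and $B$, with the correct monotonicity within $A$ and within $B$. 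Because $A$ and $B$ are disjoint and $A$ precedes $B$, condition~\eqref{eq:maxmin} holds automatically, as remarked after the proof of Proposition~\ref{prop:ramsey}. Hence the vertices are distinct and form a copy of $P$.

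\textbf{Main obstacle.} The step I expect to be hardest is verifying, separately for each of the four orientations, three things. First, the grey-edge sets are exactly those forced and are pairwise distinct. Second, the intervals $I_i,J_i$ match the positional constraints, so that the analogues of Claims~\ref{claim:algorithm} and~\ref{claim:no_edges} go through. In particular, the vertex reached at backward step $k$ must lie in the current interval, which uses the $k-i$ path vertices that must lie beyond it. Third, the boundary cases at steps $1$ and $2$ work. This is a careful index check rather than a new idea.
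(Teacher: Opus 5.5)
Your proposal is correct and follows essentially the same route as the paper: the lower bound uses the same four extremal graphs (all $A$--$B$ edges meeting $k-1$ suitably chosen extreme vertices of $A$ and $k-1$ of $B$), and the upper bound uses the $(k-1)^2$ grey edges plus the single-colour deletion algorithm, which removes at most $(n-2)(M-k+1)$ edges. One small index slip: for $\AP_n$ the $i$-th last vertex of $B$ is grey-joined only to the last $k-i-1$ vertices of $A$, not $k-i$ (for instance $a_2=M-k+2$, $b_k=N$ is a genuine path edge), and $k-i-1$ is what your count $\binom{k}{2}+\binom{k-1}{2}$ actually uses.
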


\begin{proof}
The proof is similar to the proofs of Theorem \ref{th:ramsey} and Proposition \ref{prop:ramsey}.
Let $M=N/2$ and $k=n/2$.

Let $A=[1,M]$ and $B=[M+1,N]$, and let $G$ be an ordered bipartite graph with bipartition classes $A$ and $B$.
First, we show that if $e(G)>(n/2-1)(N-n/2+1)$, then $G$ contains a copy of $P=(v_i)_{i=1}^n$.
Note that in such a copy, $(v_{2i-1})_{i=1}^k$ all lie in $A$ and $(v_{2i})_{i=1}^k$ all lie in $B$. 

Recall from the proofs of Theorem~\ref{th:ramsey} and Proposition \ref{prop:ramsey} that there are $(k-1)^2$ grey edges in $G$ that are never present in any copy of $P$.
We then run the same algorithm as in the proofs of Theorem~\ref{th:ramsey} and Proposition \ref{prop:ramsey} with the difference that instead of removing the leftmost (or rightmost) red and blue edges in each step, we remove the leftmost (or rightmost) edge.
In each of the $n-2$ steps, at most $M-k+1$ edges are removed. Once again, if any non-grey edge remains after $n-2$ of the algorithm, then there is a copy of $P$ in $G$.
Therefore, if $e(G)>(n-2)(M-k+1)+(k-1)^2=(n/2-1)(N-n/2+1)$, then $G$ contains a copy of $P$. 

On the other hand, if we let $G$ be the ordered bipartite graph with bipartition classes~$A$ and $B$, and edge set
\begin{align*}
&\{ij:i\in[k-1],j\in B\}\cup\{ij:i\in A,j\in[M+1,M+k-1]\} & \text{if }P=P_n^{<,<},\\
&\{ij:i\in[M-k+2,M],j\in B\}\cup\{ij:i\in A,j\in[N-k+2,N]\} & \text{if }P=P_n^{>,>},\\
&\{ij:i\in[k-1],j\in B\}\cup\{ij:i\in A,j\in[N-k+2,N]\} & \text{if }P=P_n^{<,>},\\
&\{ij:i\in[M-k+2,M],j\in B\}\cup\{ij:i\in A,j\in[M+1,M+k-1]\} & \text{if }P=P_n^{>,<}.
\end{align*}
Then, $e(G)=2(k-1)M-(k-1)^2=(n/2-1)(N-n/2+1)$ in all cases. 
We show that $G$ contains no copy of $P=P_n^{<,<}$ (the other cases are similar).
Indeed, if $G$ contains a copy of $P$, then the $k$-th vertex in $P$ must lie in $A\setminus[k-1]$ and the $(2k)$-th vertex in $P$ must lie in $B\setminus[M+1,M+k-1]$, which means they do not form an edge in $G$ by definition, a contradiction. 
\end{proof}

To prove Theorem~\ref{thm:turanother}, which gives an upper bound on the ordered Tur\'an numbers of~$P_n^{<,<}$ and $P_n^{>,>}$, we iteratively partition the current vertex set in half, and apply Theorem~\ref{thm:bipturan}, our bipartite Tur\'an result. 
\begin{proof}[Proof of Theorem~\ref{thm:turanother}]
First, consider the case when $N/n=2^{t-1}$ for some integer $t\geq1$. We use induction on $t$ to show that $\mathrm{ex}_<(N,P)\leq \frac12nN(\log_2(N/n)+1)$. When $t=1$, this is immediate as $N=n$, and $\mathrm{ex}_<(N,P)\leq\binom{N}2\leq\frac12N^2=\frac12nN(\log_2(N/n)+1)$.
Now suppose this holds for some $t\geq 1$, and consider the case when $N/n=2^t$.
Let $A=[1,N/2]$ and $B=[N/2+1,N]$.
If $G$ is an ordered graph on $N$ vertices containing no copy of $P$, then $e(A),e(B)\leq\ex_<(N/2,P)$ and $e(A,B)\leq\ex_<(N/2,N/2,P)$.
Hence, by induction hypothesis and Theorem~\ref{thm:bipturan},
\begin{align*}
\ex_<(N,P)&\leq2\ex_<(N/2,P)+\ex_<(N/2,N/2,P)\\
&\leq\frac12nN(\log_2(N/2n)+1)+\left(\frac12n-1\right)\left(N-\frac12n+1\right)\\
&\leq\frac12nN(\log_2(N/2n)+2)=\frac12nN(\log_2(N/n)+1),
\end{align*}
which completes the induction. 

Now for the general case, let $t=1+\floor{\log_2(N/n)}$, so $2^{t-1}n\leq N<2^tn$.
Then, from above, $\ex_<(N,P)\leq\ex_<(2^tn,P)\leq2^{t-1}n^2(t+1)\leq nN(\log_2(N/n)+2)$.
\end{proof}

\section*{Acknowledgments}

This project originated in the online workshop \emph{Topics in Ramsey theory} organised by the Sparse Graphs Coalition.
We are grateful to Stijn Cambie, Nemanja Draganić, António Gir\~{a}o, Eoin Hurley, and Ross Kang for organising the workshop, and to Kalina Petrova for proposing this problem.
We also thank Andrea Freschi for letting us know about the related independent work in~\cite{barat2025matchings} while it was still in preparation, and Marian Poljak for sharing computational data.

\bibliographystyle{abbrvnat}  
\bibliography{bibliography}

\end{document}